\newcommand*{\un}[1]{\underline{#1}}
\newcommand*{\be}{\begin{equation}}
\newcommand*{\ee}{\end{equation}}
\newcommand*{\ba}{\begin{aligned}}
\newcommand*{\ea}{\end{aligned}}
\newcommand{\CL}{\mathcal{L}}
\newcommand{\ve}{\varepsilon}
\newtheorem{condition}{Condition}
\begin{document}

\title{Transfinite fractal dimension of trees and hierarchical scale-free graphs}


\author{{J\'ulia Komj\'athy \thanks{j.komjathy@tue.nl}}\inst{1} \and Roland Molontay \thanks{molontay@math.bme.hu}\inst{2} \inst{3}
\and K\'aroly Simon \thanks{simonk@math.bme.hu}\inst{2} \inst{3}}

\institute{Department of Mathematics and Computer Science, Eindhoven University of Technology, Netherlands\\
\and
Department of Stochastics, Budapest University of Technology and Economics, Hungary
\and
MTA-BME Stochastics Research Group, Hungary
}
\authorrunning{J\'ulia Komj\'athy \and Roland Molontay \and K\'aroly Simon}

\maketitle

\begin{abstract}
{In this paper, we introduce a new concept: the  transfinite fractal dimension of graph sequences motivated by the notion of fractality of complex networks proposed by Song \emph{et al.} We show
that the definition of fractality cannot be applied to networks with `tree-like' structure and exponential growth rate of neighborhoods. However, we show that the definition of fractal dimension could
be modified in a way that takes into account the exponential growth, and with the modified definition, the fractal dimension becomes a proper parameter of graph sequences. We find that this parameter is related to
the growth rate of trees. We also generalize the concept of box dimension further and introduce the transfinite Cesaro fractal dimension. Using rigorous proofs we determine the optimal box-covering and
 transfinite fractal dimension of various models: the hierarchical graph sequence model introduced by Komj\'athy and Simon, Song-Havlin-Makse model, spherically symmetric trees, and
supercritical Galton-Watson trees.}

\keywords{fractal dimension, growth rate, hierarchical graph sequence model, Song-Havlin-Makse model, spherically symmetric
tree, Galton-Watson tree}
\vspace{4mm}

AMS Mathematics Subject Classification (2010): 05C82, 90B10, 90B15, 91D30, 60J80.
\end{abstract}

\section{Introduction}
The study of complex networks has received immense attention recently, mainly because networks are used in several disciplines of science, such as in Information Technology (World Wide Web, Internet), Sociology (social relations), Biology (cellular networks) etc. Understanding the structure of such networks has become essential since the structure affects their performance, for example the topology of social networks influences the spread of information and disease. In most cases real networks are too large to describe them explicitly. Hence, models must be considered. A network model can be static, i.e., it models a snapshot of the network, such as \cite{chung2002connected,erdos1960evolution,molloy1995critical} or dynamic, i.e., the model mimics the evolution of the network on the long term \cite{barabasi1999emergence}.

Many networks were claimed to show self-similarity and fractal behaviour \cite{gallos2007review}. Heuristically, fractality of a network means that the network looks similar to itself on different scales: if one zooms in on a sub-network, one is expected to see the same qualitative behaviour as in the whole network. Unfortunately, most of the classical random graph models (e.g. the Chung-Lu model \cite{chung2002connected}, the configuration model \cite{Bollobas01} or the preferential attachment model \cite{barabasi1999emergence}) do not model the phenomenon of hierarchical or self-similar structure in the network. To solve this problem, Barab\'asi, Ravasz and Vicsek introduced  deterministic
hierarchical scale-free graphs constructed  by a method which is common in generating fractals \cite{barabasi2001deterministic}. Their proposed deterministic, hierarchical network (that we call "cherry") can be seen in Figure \ref{fig:cherry},  Ravasz and Barab\'asi improved this original "cherry" model to further accommodate \emph{clustering}, that is, the presence of local triangles; and obtained similar
clustering behavior to many real-world  networks \cite{ravasz2003hierarchical}. 

 A similar fractal based approach was introduced by Andrade \emph{et al.} \cite{andrade2005apollonian}, the Apollonian networks. The name comes from the generating method of the model, that uses Apollonian circle packings to obtain the network. Apollonian networks were generalized to higher dimensions and 
investigated by Zhang \emph{et al.} \cite{zhang2006high,zhang2006evolving}. For further fractal related network
models see e.g.\ \cite{dorogovtsev2002pseudofractal,karci2015new,zhang2006deterministic}. The first and last authors of the present paper generalized the "cherry" model of
 \cite{barabasi2001deterministic} by introducing a general hierarchical graph sequence derived from a graph directed self-similar fractal \cite{komjathy2011generating}. 
 We mention that there are also some natural, namely, spatial, random network models where a hidden hierarchical srtructure is embedded in the graph: Heydenreich, Hulshof and Jorritsma showed the existence of a hierarchical structure in the scale-free percolation model~\cite{heydenreich2017structures}.

To accommodate the observed fractality in network models is one side of the coin. The other side is to identify fractality and the presence of self-similarity of complex networks beyond the heuristics.  A method was proposed by Song, Havlin and Makse  \cite{song2005self}; they suggested that the procedure for networks must be similar to that of regular fractal objects: using the box-covering method. Once a network is covered with boxes, the notion of fractality stands for a polynomial relation between the number
of boxes needed to cover the network and the size of the boxes. The polynomial relation was verified in many real-world networks, e.g. the World Wide Web, actor collaboration network
and protein interaction networks \cite{kim2007fractality,rozenfeld2009fractal,song2006origins}. The (approximate) exponent of this relation gives, heuristically, the box-covering dimension of the network. The fractality and self-similarity of complex networks was
investigated in several further articles 
\cite{gallos2007review,rozenfeld2007fractal,song2007calculate,song2006origins}
and we give a short review of this topic in Section \ref{fractality}. 

While many real life-networks do satisfy an approximate polynomial relationship between box sizes and the number of boxes needed,  for example, the Internet at router level or most of the social networks \cite{gallos2007review,nagy2018data} do not. In these and many other cases, at least locally, the neighborhood of a vertex grows exponentially as the radius grows. In these cases, no polynomial relationship can be found. 
On the other hand, for network models with non-polynomial local growth rate a new definition of box dimension is needed, that is the \emph{transfinite fractal dimension} developed by Rozenfeld \emph{et al.} in \cite{havlin2007fractal,rozenfeld2009fractal} (see \eqref{eq:exp-rel} below). As the main point of this article, we make the heuristic definition mathematically rigorous.

To obtain a mathematically rigorous yet natural definition, we consider the dimension of graph sequences. This is \emph{natural} for two reasons. The first reason is that for finite networks, once the box size exceeds the diameter of the network, a single box is enough to cover the whole network, and any relation between the sizes of the boxes and their number can only be valid in a given range of box sizes, hence, no true `dimension' concept can exist in a mathematical sense that resembles box-covering. The second reason is that  many networks grow in size as time passes, hence, it is natural to consider sequences of graphs with more and more vertices.
  
 We test our definition of transfinite fractal dimension on some of the above mentioned models that intuitively contain hierarchical structures. Namely, we test the definition on the above mentioned  "cherry" model by Barab\'asi \emph{et al.} \cite{barabasi2001deterministic} and its generalisation, the hierarchical graph sequence proposed by Komj\'athy and Simon \cite{komjathy2011generating}, and a recursively defined hierarchical model, proposed by Song, Havlin and Makse \cite{song2006origins}. We further test our definition on random and deterministic trees: branching processes and spherically symmetric trees. Recursively defined trees naturally contain hierarchy; namely, a subtree of a vertex may resemble the whole tree. While there is no obvious direct relationship between the optimal number of boxes to cover a network and the exponential growth rate of neighborhood sizes, on the studied models we confirm that the two parameters are indeed intimately related. Our definition of transfinite fractal dimension gives a natural parameter that indeed captures the exponential growth of the neighborhoods of the model in a quantitative way. On trees, we show that the box-covering is indeed related to the (exponential) \emph{growth rate}; introduced by Lyons and Peres \cite{lyons2016probability}.

In the literature, box-covering is determined mostly by approximation algorithms \cite{deng2016performance,song2007calculate}, while our method is
rigorous on the above mentioned models. It is an interesting further direction of  research to see how well approximation algorithms perform on the models that we rigorously study in this paper. We mention that due to the exponential scaling; our definition is \emph{robust} in the sense that if an approximation algorithm is able to approximate the optimal number of boxes of a network up to finite constant factors, than the empirical box dimension will confirm the theoretical value that we derive here.

 We mention that other graph dimension concepts have been also generalized to the infinite case such as the metric and partition dimensions \cite{caceres2012metric,tomescu2009metric}. In \cite{baccelli2018dimension}  the  Minkowski and Hausdorff dimensions are
defined for unimodular random discrete metric spaces while \cite{baccelli2018dimension2} sheds light on the connections between these notions and the polynomial
growth rate of the underlying space. In this work, we focus on the generalization of the box-covering dimension. For a recent survey about other notions of dimension we refer to \cite{rosenberg2018survey}.

\emph{Structure of  the paper.} After a short review of the topic of network fractality
by Song \emph{et al.} \cite{song2005self}, in Section \ref{fractality} below we introduce the definition of box dimension for graph sequences, the \emph{transfinite fractal dimension} and a generalized version, the \emph{transfinite Cesaro fractal dimension}.  In Section \ref{model} we determine the optimal number of boxes
needed to cover the hierarchical graph sequence model \cite{komjathy2011generating}.  We find that the hierarchical graph sequence model \cite{komjathy2011generating} does not have a finite box dimension (based on the usual definition assuming polynomial growth) but the transfinite dimension exists (based on our new definition assuming exponential growth). In Section \ref{sec:shm} we investigate the optimal boxing  and transfinite  dimension of a fractal network model introduced by Song, Havlin and Makse \cite{song2006origins}.
In Section \ref{boxingtrees} we determine the optimal boxing and the  transfinite dimension of some deterministic and random trees, in particular, spherically symmetric trees and Galton-Watson branching processes, and
relate the obtained dimension to the growth rate of trees introduced by Lyons and Peres \cite{lyons2016probability}. Section \ref{conc} concludes the work.

\section{Fractal scaling in complex networks and concepts of box dimension}\label{fractality}

In this section, we review the concepts of box dimension of networks proposed by Song \emph{et al.} in \cite{song2005self} and  the transfinite dimension proposed by Rozenfeld \emph{et al.} \cite{havlin2007fractal,rozenfeld2009fractal},  and make the two concepts rigorous by giving mathematically precise definitions. These yield Definitions \ref{boxdim} and \ref{modboxdim} of box dimension and transfinite fractal dimension, respectively. The technique Song \emph{et al.} in \cite{song2005self} proposed for identifying the presence of fractality in complex networks is analogous to
that of regular fractals. Namely, for `conventional' fractal objects in the Euclidean space (e.g. the attractors of iterated function systems), a basic tool is the box-covering method \cite{falconer2004fractal}. This method works as follows: one covers the fractal set by smaller and smaller sizes of boxes, and finds the polynomial relationship between the optimal number of boxes used versus the side-length of the boxes; as the side-length goes to zero. A similar method can be applied to networks that we describe now.
Since the Euclidean metric is not relevant for graphs, it is reasonable to use a natural metric, namely the shortest path length between two vertices. In the case of unweighted graphs this metric is called the \emph{graph distance metric}.

The method works as follows \cite{song2007calculate}: For a given network $G$ with $N$ vertices, we partition the vertices into subgraphs (boxes) with diameter at most $\ell-1$ (it is illustrated in
Figure \ref{fig:renorm}). The minimum number of boxes needed to cover the entire network $G$ is denoted by $N_B(\ell)$. Determining $N_B(\ell)$ for any given $\ell\ge 2$ belongs to a family of NP-hard
problems but in practice various algorithms are adopted to obtain an approximate solution \cite{song2007calculate}. In accordance with regular fractals, Song \emph{et al.} proposed to define the fractal dimension
or box dimension $d_B$ of a finite graph by the approximate relationship:
\be \label{powerbox}
N_B(\ell)/N \approx: \ell^{-d_B},
\ee
i.e., the required number of boxes scales as a power of the box size, and the dimension is the absolute value of the exponent. In their reasoning, the relationship in \eqref{powerbox} should hold for a wide range of values $\ell$ with the same exponent $d_B$.

\begin{figure}[ht]
	\centering
		\includegraphics[width=100mm]{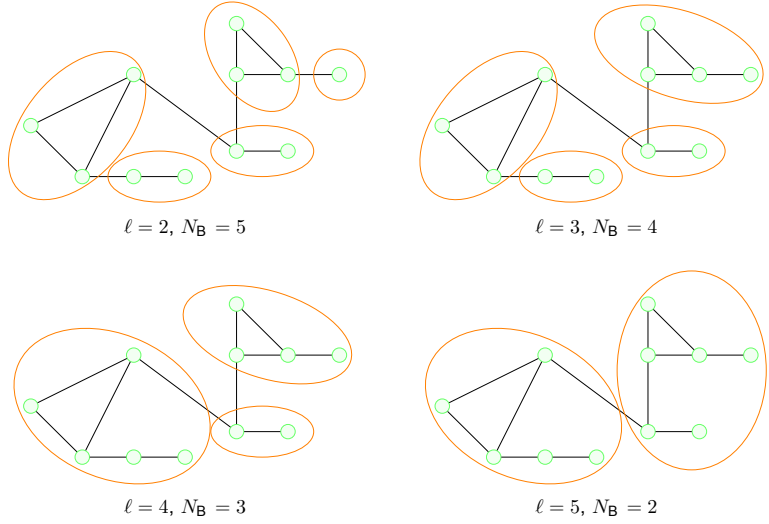}
	\caption{The box-covering algorithm as employed in a network demo of eleven nodes for different box sizes $\ell$. The figure was adapted by the author from \cite{song2005self}.}
	\label{fig:renorm}
\end{figure}

According to this method, the power form of \eqref{powerbox} (with a finite $d_B$) can be verified by plotting and fitting in a number of real-world networks such as WWW, actor collaboration network
and protein interaction networks \cite{song2006origins}. For these networks, a finite box-dimension exists. However, a large class of complex networks  (called non-fractal networks) is characterised by a sharp decay of $N_B$ with increasing $\ell$, i.e., has
infinite fractal dimension, for example, the Internet at router level or most of the social networks \cite{gallos2007review,nagy2018data} falls into this category. To distinguish these cases, they introduced the \emph{concept of fractality} as follows \cite{song2005self}:

The \emph{fractality} of a finite network (also called fractal scaling or topological fractality) means that there exists a power relation between the minimum number of boxes needed to cover the entire network and the size of the boxes.

In other words, as mentioned above, equation \eqref{powerbox} must hold for a $d_B$ for a wide range of $\ell$ for a network to show fractality.
Although it is possible to ascertain the fractal dimension with this description and \eqref{powerbox} using approximation methods, here we develop a rigorous mathematical definition shortly below. The need for a rigorous definition arises naturally: first, the relation~\eqref{powerbox} is approximate, and second, it is hard to quantify what may one call a wide range of $\ell$.

 To motivate our choice of definition, when considering regular fractal objects (that is, sets embedded in $\mathbb R^d$ for some integer $d$) the box dimension\footnote{Also called Minkowski-dimension.} is defined as the \emph{limit} of the reciprocal of the ratio of the logarithm of the number of boxes and the logarithm of the box
size, as the box size \emph{tends to $0$}. This definition would make no sense with respect to networks, since the graph distance can not be less than $1$. On the other hand, tending to \emph{infinity} with the box size might be a solution
if the network itself grows, or is infinite to start with. For this reason, we should consider \emph{graph sequences}. Several real-world networks (collaboration networks, WWW) grow in size as time proceeds, therefore it is reasonable to consider graphs of growing size, denoted by $\left\{ G_n\right\}_{n\in \mathbb N}$ (where $\mathbb N$ stands for the set of natural numbers). For infinite networks such as $\mathbb Z^d$, one can choose a root vertex (e.g. the origin) as a point of reference and consider subgraphs of the underlying infinite network centered around the reference vertex that exhaust the infinite graph (e.g. $G_n:=[-n,n]^d$ for $\mathbb Z^d$).

To be able to define the box dimension of a graph sequence, we define the above mentioned boxes of size $\ell$ first.

\begin{definition}[$\ell$-box]\label{lbox} Consider two vertices $u, v$ in a graph $G$. Let $\Gamma(u,v)$ denote the set of all paths connecting $u,v$ within $G$. The length of a path $\pi$ is defined as the number of edges on $\pi$ and is denoted by $|\pi|$. The graph distance between two vertices $u,v$ in a graph $G$ is defined as $d_G(u,v)=\min\{|\pi|: \pi\in \Gamma(u,v)\}$.
We say that a subgraph $H$ of a graph $G$ is an $\ell$-box if $d_H(u,v)\le \ell-1$ holds for all $u,v\in H$.
 
\end{definition}
Our first definition is the rigorous form of \eqref{powerbox}:
\begin{definition}[Box dimension]\label{boxdim} The box dimension $d_B$ of a graph sequence $\left\{ G_n\right\}_{n\in \mathbb N}$ is defined as
\be
d_B\left( \left\{ G_n\right\}_{n\in \mathbb N} \right):=\lim_{\ell \to \infty} \lim_{n \to \infty} \frac{\log \left( N_B^n(\ell)/|G_n|\right)}{- \log \ell},
\ee
if the limit exists; where $N_B^n(\ell)$ denotes the minimum number of $\ell$-boxes needed to cover $G_n$, and $|G_n|$ denotes the  number of vertices in $G_n$.
\end{definition}

 Note that this definition indeed gives back \eqref{powerbox}, since it means that, for each $\ve>0$, there exists  $\ell(\ve), n(\ve, \ell)$ such that whenever $\ell\ge \ell(\ve)$, \emph{every} $G_n$ with $n\ge n(\ve, \ell)$ can be convered with $|G_n|\ell^{-d_B\pm \ve}$ many $\ell$-boxes. 
We comment on the order of the limits in the previous definition. It is natural question to ask whether the limiting operations can be interchanged. Considering the fact that the number of boxes needed
to cover $G_n$ is $N_B^n(\ell)=1$ if $\ell > \mathrm{diam}(G_n)$, it is meaningless to change the order of the limits.

It is not hard to see that this definition of fractality cannot be applied to networks with exponential growth rate of neighborhoods. Indeed, in this case the optimal number of boxes
does not scale as a power of the box size. On the other hand, the box-covering method yields  another  natural parameter if we modify the required functional relationship
between the minimal number of boxes and the box size as in the transfinite fractal cluster dimension by Rozenfeld \emph{et al.} \cite{havlin2007fractal,rozenfeld2009fractal}). Namely, we might consider finding $\tau$ that satisfies
\be\label{eq:exp-rel}
	N_B(\ell)/N \approx: e^{-\tau \cdot \ell}
	\ee
for a wide range of $\ell$.
Again, we make this concept rigorous and quantifyable by defining the \emph{transfinite fractal} dimension of graph sequences similarly:
\begin{definition}[Transfinite fractal dimension]\label{modboxdim} The transfinite fractal dimension $\tau$ of a graph sequence $\left\{ G_n\right\}_{n\in \mathbb N}$ is defined by
\be \label{tau_lim}
\tau \left( \left\{ G_n\right\}_{n\in \mathbb N} \right):= \lim_{\ell \to \infty} \lim_{n \to \infty} \frac{\log \left(N_B^n(\ell) / |G_n| \right)}{-\ell},
\ee
if the limit exists; where $N_B^n(\ell)$ denotes the minimum number of $\ell$-boxes needed to cover $G_n$, and $|G_n|$ denotes the  number of vertices in $G_n$.
\end{definition}

\begin{remark} We call $\tau$ the transfinite fractal dimension or `growth-constant' since it captures how spread-out neighborhoods of vertices are, on an exponential scale.
\end{remark}

We shall see in Section \ref{spherical} that for some models with exponentially growing neighborhood sizes the limit in \eqref{tau_lim} does not exist but the limit of the Cesaro means does. This yields 
the \emph{transfinite Cesaro fractal box dimension}. We modify Def.~\ref{modboxdim} by considering the Cesaro-sum instead of the pure limit in $n$:

\begin{definition}[Transfinite Cesaro fractal dimension]
\label{def::ourdim_def2}
The transfinite Cesaro fractal dimension $\tau^*$ of a graph sequence $\left\{ G_n\right\}_{n\in \mathbb N}$ is defined by
\begin{equation}
\tau^*\left( \left\{ G_n\right\}_{n\in \mathbb N} \right):=\lim_{\ell \to \infty} \lim_{n \to \infty} \frac1n \sum_{i=1}^n \frac{\log \left( N_B^{i+\ell}(\ell) / {|G_{i+\ell}|}\right)}{-\ell},
\end{equation}
if the limit exists; where $N_B^n(\ell)$ denotes the minimum number of $\ell$-boxes  needed to cover $G_n$, and $|G_n|$ denotes the  number of vertices in $G_n$.
\end{definition}
The definition of box dimension for graph sequences with exponentially growing neighborhood sizes was first introduced in the Bachelor thesis of the second author \cite{molontay2013networks}, that is an unpublished work. Dai \emph{et al.} \cite{dai2017modified} studied the transfinite fractal dimension of the weighted version of the model in \cite{komjathy2011generating} and a similar weighted fractal network \cite{dai2015modified}.
In what follows we investigate graph sequences with exponentially growing neighborhood sizes, and determine their transfinite fractal as well as transfinite Cesaro fractal dimension. These examples shall demonstrate that our definition is a natural one.

	\section{Optimal boxing of a hierarchical scale-free network model based on fractals}
	\label{model}
	\subsection{Description of the model}
	\label{modeldef}
	This model was introduced by the first and last author of this article. In this section, we follow the notation of  \cite{komjathy2011generating}.
We start with an arbitrary initial bipartite graph $G$, the \emph{base graph}, on $N$ vertices and we define a hierarchical sequence of deterministic graphs $\left\{ \mathrm{HM}_n \right\}_{n\in \mathbb N}$ in a recursive manner.
Let $V(\mathrm{HM}_n)$, the set of vertices of $\mathrm{HM}_n$ be $\left\{0,1,\dots ,N-1\right\}^n$. The construction of $\mathrm{HM}_n$ from $\mathrm{HM}_{n-1}$ works by taking $N$ identical copies of $\mathrm{HM}_{n-1}$, corresponding to the $N$ vertices of
the base graph $G$. Next, we construct the edges between the copies described in Def.~\ref{edgeset_def} below. Along these lines, $\mathrm{HM}_n$ contains $N^{n-1}$ copies of $\mathrm{HM}_1$, connected in a hierarchical way.

Let $G$, our \emph{base graph}, be any labeled \emph{bipartite graph} on the vertex set $\Sigma=\Sigma_1=\left\{0,\dots ,N-1\right\}$ with bipartition $\Sigma=V_1 \cup V_2$, such that one of the end points of any edge in  $G$ is in $V_1$, while the other one is in $V_2$. We write $n_i:=|V_i|$, $i=1,2$ and $E(G)$ for the edge set of $G$. We denote edges as ${ x \choose y }$. The vertex set of $\mathrm{HM}_n$ is then given by  $\Sigma_n= \{ (x_1 x_2\dots x_n): x_i \in \Sigma\}$, all words of length $n$ above the alphabet  $\Sigma$. In order to define the edge set of $\mathrm{HM}_n$, we need to introduce some further definitions \cite{komjathy2011generating}.
\begin{figure}[t]
	\centering
		\includegraphics[width=120mm]{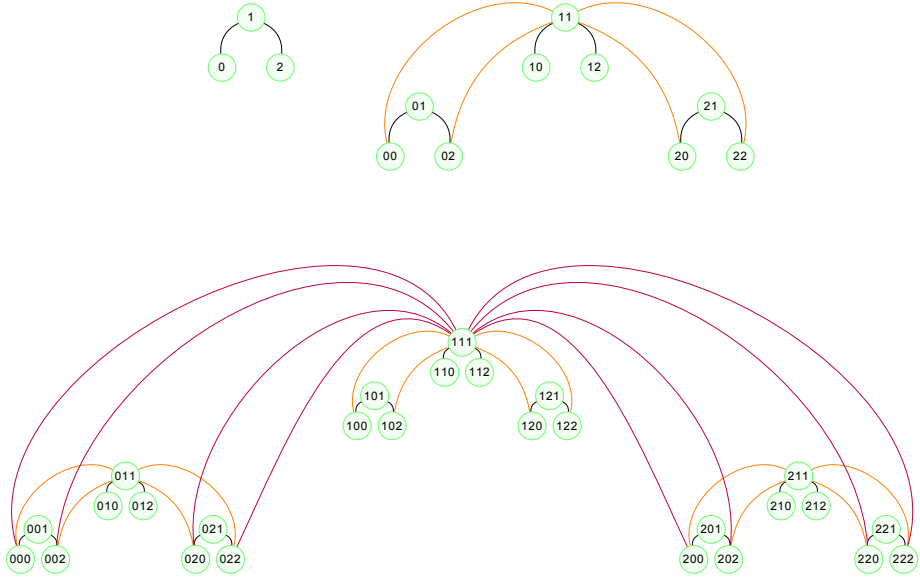}
	\caption{The first three elements of the \enquote{cherry} model: $\mathrm{HM}_1$,  $\mathrm{HM}_2$ and $\mathrm{HM}_3$. The figure was adapted by the authors from \cite{komjathy2011generating}.}
	\label{fig:cherry}
\end{figure}

\begin{definition}\label{edgeset_def} \
\begin{enumerate}
  \item We assign a type to each element of $\Sigma$. Namely,
$
\text{typ}(x):=
\left\{
  \begin{array}{ll}
    1, & \hbox{if $x \in V_1$;} \\
    2, & \hbox{if $x \in V_2$.}
  \end{array}
\right.
$
  \item For $i=1,2$, we say that the {\bf type} of a word $\un z=(z_1z_2\dots z_n )\in \Sigma _n$ equals $i$ and write $\text{typ}(\un z)=i$, if $\text{typ}(z_j)=i$, for all $ j=1,\dots, n$.  Otherwise $\text{typ}(\un z):=0$.

\item  For  $\un x= (x_1\dots x_n), \un y=(y_1\dots y_n) \in \Sigma_n$ we denote the {\bf common prefix} by
\[  \un x \wedge \un y := (z_1 \dots z_k) \text{ s.t. } x_i=y_i=z_i,  \forall i=1,\dots, k \text{ and } x_{k+1}\neq y_{k+1}, \]
\item and the {\bf postfixes} $\tilde {\un x}, \tilde {\un y} \in \Sigma_{n-|\un x \wedge \un y|}$ are determined by
    \[\un x =: (\un x \wedge \un y) \tilde {\un x},\  \un y =: (\un x \wedge \un y) \un{\tilde y}, \] where the concatenation of the words $\un a,\un b$ is denoted by $\un a \un b$.
\end{enumerate}\end{definition}
Next, we define the edge set $E(\mathrm{HM}_n)$. Two vertices $\un x$ and $\un y$ in $\mathrm{HM}_n$ are connected by an edge if and only if the following criteria hold:
\begin{description}
  \item[(a)]  One of the postfixes $\un {\tilde x}, \un {\tilde y}$ is of type $1$, the other is of type $2$,
  \item[(b)]  for each $i>|x\wedge y|$, the coordinate pair ${ x_i \choose y_i}$ forms an edge in $G$.
\end{description}
\begin{remark}[Hierarchical structure of $\mathrm{HM}_n$]\label{hierarch} For every initial digit $x \in \{0,1, \dots, N-1\}$, consider the set $W_x$ of vertices $(x_1 \dots x_n)$ of $\mathrm{HM}_n$ with $x_1=x$. Then the induced subgraph on $W_x$ is identical to $\mathrm{HM}_{n-1}$.
\end{remark}
The following two examples satisfy the requirements of our general model.
\begin{example}[Cherry]\label{cherry}
The \enquote{cherry} model was introduced in \cite{barabasi2001deterministic}, and is presented in Figure \ref{fig:cherry}: Let $V_1=\{1\}$ and $V_2=\{0,2 \}$, $E(G)=\left\{ (1,0), (1,2)\right\}$.
\end{example}
\begin{example}[Fan]\label{gkal} Our second example is called \enquote{fan}, and is defined in Figure \ref{fig:fan}. Note that here $|V_1|>1$.
\end{example}
\begin{figure}[htbp]
	\centering
	\includegraphics[width=120mm]{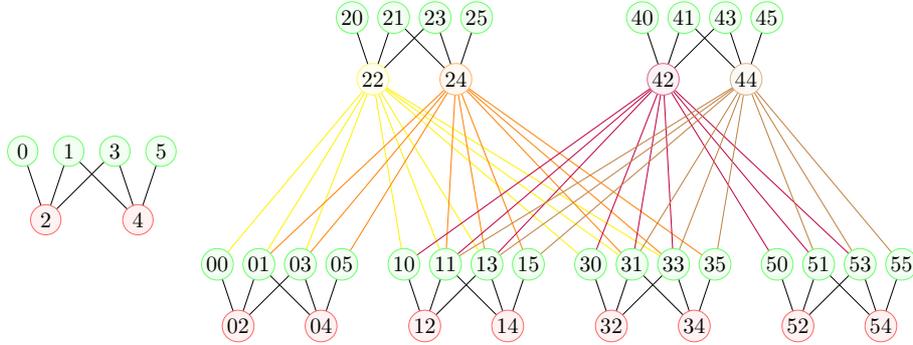}
	\caption{The first two elements of the \enquote{fan}. Here $V_1=\{2,4\}$ and $V_2$=\{0,1,3,5\}. (They contain additionally all loops.) The figure was adapted by the authors from \cite{komjathy2011generating}.}
	\label{fig:fan}
\end{figure}

\subsection{The optimal box-covering}
	\label{optimal}
	
	In this section, we determine the optimal box-covering of the hierarchical graph sequence model introduced before. We find that the optimal number of boxes does not scale as a power of the box size,
meaning that this graph sequence has no finite box dimension, on the other hand, the transfinite fractal dimension exists and is a meaningful parameter.

\begin{theorem}
\label{hier_tau}
The hierarchical graph sequence $\left\{ \mathrm{HM}_n\right\}_{n\in \mathbb N}$ is not fractal, but transfractal. That is, its fractal dimension (as in Def.~\ref{boxdim}) does not exists, while its transfinite fractal dimension (as in Def.~\ref{modboxdim}) exists and equals \be \label{tau_value}
\tau \left( \left\{ \mathrm{HM}_n\right\}_{n\in \mathbb N} \right) = (\log N)/2,
\ee
where $N$ denotes the number of vertices in the base graph $G$ of $\left\{ \mathrm{HM}_n\right\}_{n\in \mathbb N}$.
\end{theorem}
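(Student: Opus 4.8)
\emph{Strategy.} Since $|\mathrm{HM}_n| = N^n$, both assertions reduce to a single estimate on the optimal box count: I claim that for every fixed $\ell$ and all large $n$,
\[ N_B^n(\ell) = N^{\,n-k(\ell)}, \qquad k(\ell):=\max\{k:\ \mathrm{diam}(\mathrm{HM}_k)\le \ell-1\}, \]
and that $k(\ell)/\ell \to 1/2$ as $\ell\to\infty$. Granting this, the inner limit in Def.~\ref{modboxdim} is immediate, $\frac{\log(N_B^n(\ell)/N^n)}{-\ell}=\frac{k(\ell)\log N}{\ell}$, independent of $n$ for $n$ large, and letting $\ell\to\infty$ gives $\tau=(\log N)/2$. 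The same quantity divided by $-\log\ell$ instead of $-\ell$ equals $\frac{k(\ell)\log N}{\log\ell}\sim \frac{\ell\,\log N}{2\log\ell}\to\infty$, so the limit in Def.~\ref{boxdim} diverges and the (finite) box dimension does not exist; this is the precise sense in which $\{\mathrm{HM}_n\}$ is transfractal but not fractal.

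\emph{Upper bound (construction).} By the self-similarity in Remark~\ref{hierarch}, fixing any prefix of length $n-k$ and letting the last $k$ digits vary gives an induced copy of $\mathrm{HM}_k$; these $N^{\,n-k}$ copies partition $V(\mathrm{HM}_n)$, and on each the induced metric is exactly that of $\mathrm{HM}_k$. Hence each copy is an $\ell$-box as soon as $\mathrm{diam}(\mathrm{HM}_k)\le \ell-1$, and taking $k=k(\ell)$ gives $N_B^n(\ell)\le N^{\,n-k(\ell)}$. To locate $k(\ell)$ I would first show $\mathrm{diam}(\mathrm{HM}_k)\sim 2k$. The key structural fact is that, by the adjacency rule~(a)--(b) following Def.~\ref{edgeset_def}, a single edge replaces the \emph{maximal monochromatic suffix} of one endpoint by an opposite-type suffix; consequently, to join two vertices $\un x,\un y$ whose common prefix $\un x\wedge \un y$ has length $m$ one must first \enquote{build up} a monochromatic suffix of length $n-m$ starting from a generic vertex and then \enquote{build it down} again to the target pattern, each phase costing of order $n-m$ steps. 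This yields $\mathrm{diam}(\mathrm{HM}_k)=(2+o(1))k$, hence $k(\ell)=(\tfrac12+o(1))\ell$.

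\emph{Lower bound (the main obstacle).} The matching bound $N_B^n(\ell)\ge N^{\,n-k(\ell)}$ is the hard part, and a naive volume bound fails: the monochromatic \enquote{hub} vertices (e.g.\ $1^n$) have degree growing exponentially in $n$, so a single $\ell$-box placed at a hub already contains astronomically many vertices, and such hubs create genuine shortcuts that push some distances below $2(n-m)$. My plan is to prove the lower bound by induction on $n$ through the same self-similar decomposition, showing that an optimal cover may be rearranged so that, up to a vanishing fraction of boxes, no box crosses between the $N$ top-level copies; this yields the recursion $N_B^n(\ell)\ge N\cdot N_B^{\,n-1}(\ell)$, which together with the base case $N_B^{k(\ell)}(\ell)=1$ gives exactly $N^{\,n-k(\ell)}$. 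The content of the rearrangement is that cross-block edges only ever link monochromatic words, so a box exploiting a hub shortcut covers only a subexponential-in-$n$ fraction of the bulk and can be split without asymptotic loss. An alternative route is a weighted (fractional) covering bound that assigns weight to the $(1-o(1))$-fraction of \enquote{typical} vertices --- those with only short monochromatic runs --- and shows, via the distance analysis above restricted to typical pairs, that each $\ell$-box absorbs at most $N^{(1/2+o(1))\ell}$ of them.

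\emph{Conclusion.} Combining the two bounds gives $N_B^n(\ell)=N^{\,n-k(\ell)}$ with $k(\ell)=(\tfrac12+o(1))\ell$, from which $\tau(\{\mathrm{HM}_n\})=(\log N)/2$ and the non-existence of $d_B$ follow as in the first paragraph. I expect essentially all of the difficulty to sit in the exact shortest-path and diameter analysis, and in the hub-rearrangement (or typical-vertex weighting) that pins the constant to exactly $1/2$ despite the strongly inhomogeneous neighborhood growth; the covering construction and the final limit computations are then routine.
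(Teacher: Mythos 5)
Your upper bound and diameter analysis are sound and match the paper: the prefix decomposition giving $N_B^n(\ell_k)\le N^{n-k}$ is exactly Lemma~\ref{lemma_upper}, and $\mathrm{diam}(\mathrm{HM}_k)=2(k-1)+\mathrm{diam}(G)$ is Lemma~\ref{diam_lem} (the paper proves it exactly, not just asymptotically, via the block decomposition of postfixes — essentially your \enquote{build up / build down} heuristic made precise). The limit bookkeeping at the end, including the restriction to the subsequence $\ell_k$ by monotonicity of $N_B^n(\ell)$ in $\ell$, is also how the paper concludes. But the lower bound — which you correctly identify as the main obstacle — is where your proposal has a genuine gap: you offer two unexecuted plans, and the first one would fail as stated. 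The induction route needs the recursion $N_B^n(\ell)\ge N\cdot N_B^{n-1}(\ell)$, but an optimal cover may contain boxes straddling several top-level copies, and the intersection of such a box with one copy $W_x$ need not be an $\ell$-box at all: the subgraph diameter of the intersection can exceed $\ell-1$ once the internal paths that exited the copy are cut. The observation that cross-copy edges join only monochromatic words does not repair this, and the admitted \enquote{vanishing fraction} slack is inconsistent with the exact equality $N_B^n(\ell)=N^{n-k(\ell)}$ you claim — which is in any case stronger than what is true or needed: the paper's lower bound, $N_B^n(\ell_k)\ge N^{n-k+1-n_1}$ (Lemma~\ref{lemma_lower2}), loses a constant factor $N^{1-n_1}$, harmless for $\tau$ but incompatible with exactness when $n_1>1$. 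Your alternative fractional-covering route likewise rests on an unproven ball-size estimate (each $\ell$-box absorbs at most $N^{(1/2+o(1))\ell}$ typical vertices), which is the entire difficulty restated, not resolved.

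The idea you are missing is the paper's explicit \emph{witness construction} (Lemmas~\ref{lemma_lower} and~\ref{lemma_lower2}): for each of the $N^{n-k+1-n_1}$ prefix classes, pick a vertex whose code ends in $k-1+n_1+1$ letters of \emph{strictly alternating type}. By the edge rule~(a) after Def.~\ref{edgeset_def}, an edge can only replace a typed postfix by one of the opposite type, so any path joining two such witnesses must flip the typed postfix one length at a time on both sides, forcing pairwise distance at least $2(k-1+n_1)+1\ge \ell_k$; hence each witness sits in a distinct $\ell_k$-box. This construction neutralizes precisely the hub shortcuts you worry about — no path through a monochromatic hub can beat the type-flipping count — without any rearrangement of covers or counting of typical vertices. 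In short: your architecture and the reduction to box counts are right, but the load-bearing lower bound is a sketch whose proposed mechanisms do not work, whereas a direct separated-set argument closes it cleanly.
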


In the rest of this section we investigate the optimal boxing of the model for certain box sizes, namely those that can be expressed as $\mathrm{diam}(\mathrm{HM}_k)+1$. We thus define
\be\label{ellk-bkn} \ell_k:=\mathrm{diam}(\mathrm{HM}_k)+1. \ee  
Using this notation, we prove Theorem \ref{hier_tau}. The analysis of the box-covering consists of two main parts: giving upper and lower bound on $N_B^n(\ell_k)$.

\begin{figure}[t]
	\centering
		\includegraphics[width=120mm]{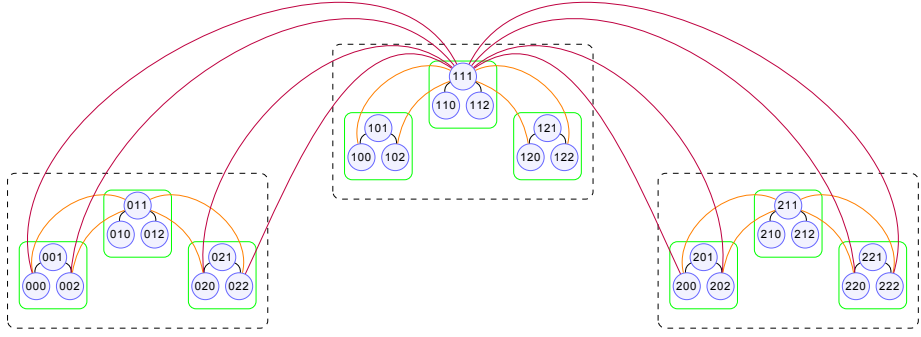}
	\caption{The third iteration of an instance of the hierarchical graph sequence model, called "cherry" model:  $\mathrm{HM}_3$. The boxing of the graph is also highlighted: the green boxes illustrate an optimal 3-boxing and the dashed boxes show an optimal 7-boxing of the graph, i.e. $N_B(3)=9$ and $N_B(7)=3$. The transfinite dimension of the model is $\tau(\left( \mathrm{HM}_n\right)_{n\in \mathbb N})=(\log K)/2$, here the base graph is on $K=3$ vertices.}
	\label{fig:boxcherry}
\end{figure}

\subsubsection{Upper bound on the optimal number of boxes.}

The following lemma is a useful tool to examine the box dimension of the graph sequence. Here we use the notation of Section \ref{model}.

\begin{lemma}
\label{diam_lem}
The diameter of the hierarchical graph sequence model $\mathrm{HM}_n$ (defined in Section \ref{modeldef}) is $\mathrm{diam}(\mathrm{HM}_n)=2(n-1)+\mathrm{diam}(G)$.
\end{lemma}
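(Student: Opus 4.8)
The plan is to prove the diameter formula $\mathrm{diam}(\mathrm{HM}_n)=2(n-1)+\mathrm{diam}(G)$ by induction on $n$, exploiting the recursive block structure described in Remark~\ref{hierarch}. The base case $n=1$ is immediate since $\mathrm{HM}_1=G$, so $\mathrm{diam}(\mathrm{HM}_1)=\mathrm{diam}(G)=2\cdot 0+\mathrm{diam}(G)$. For the inductive step, I assume the formula for $\mathrm{HM}_{n-1}$ and analyze the distance between two arbitrary vertices $\un x=(x_1\dots x_n)$ and $\un y=(y_1\dots y_n)$ by conditioning on whether they share the same initial digit, i.e.\ on whether $x_1=y_1$ or not.

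First I would handle the case $x_1=y_1$. By Remark~\ref{hierarch}, the vertices with first digit $x_1$ induce a copy of $\mathrm{HM}_{n-1}$, so the distance \emph{within} that copy is at most $\mathrm{diam}(\mathrm{HM}_{n-1})=2(n-2)+\mathrm{diam}(G)$ by the induction hypothesis. The subtle point is whether a shorter path could \emph{leave} the copy and re-enter; I would argue that this cannot decrease the distance, using the edge criterion (b): any edge incident to a vertex with first digit $x_1$ must, via the prefix/postfix decomposition, keep the first coordinate fixed or pass through specific coordinate-edges of $G$, so excursions outside the copy do not help. This gives the upper bound for same-prefix pairs, and the matching lower bound follows by choosing two vertices realizing the diameter of $\mathrm{HM}_{n-1}$ inside a single copy.

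The main work, and the expected obstacle, is the case $x_1\neq y_1$, where the two endpoints lie in different copies and one must cross between copies. Here the common prefix is empty, so condition (a) forces the full words $\un x,\un y$ (as postfixes) to have complementary types: one must be type~$1$ and the other type~$2$ for a \emph{direct} edge, but in general $\un x$ and $\un y$ need not be monochromatic, so a path must route through intermediate vertices that \emph{are} of pure type. I would show that to travel from the copy indexed by $x_1$ to the copy indexed by $y_1$, a geodesic naturally decomposes into: a segment of length at most $n-1$ descending within the first copy to reach a pure-type representative, a crossing edge governed by an edge ${x_1\choose y_1}$-type structure in $G$ (contributing the $\mathrm{diam}(G)$ term as we traverse the base graph between the classes), and a symmetric segment of length at most $n-1$ ascending in the second copy. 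Summing $ (n-1)+\mathrm{diam}(G)+(n-1)=2(n-1)+\mathrm{diam}(G)$ yields the upper bound.

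For the lower bound in the cross-copy case, I would exhibit an explicit pair of vertices whose distance meets $2(n-1)+\mathrm{diam}(G)$, typically by selecting first digits $x_1,y_1$ that are at distance $\mathrm{diam}(G)$ in the base graph $G$ and choosing the remaining coordinates so that every coordinate must be ``corrected'' along the path, forcing the two length-$(n-1)$ detours to be unavoidable. The key lemma I would isolate is a \emph{coordinate-by-coordinate} lower bound on path length: because each edge of $\mathrm{HM}_n$ only alters coordinates beyond the common prefix in a type-constrained way, any path between $\un x$ and $\un y$ must spend at least a prescribed number of steps resolving each disagreeing coordinate. Establishing this coordinate accounting rigorously is the crux; once it is in place, taking the maximum over all vertex pairs of the two cases gives exactly $\mathrm{diam}(\mathrm{HM}_n)=2(n-1)+\mathrm{diam}(G)$, completing the induction.
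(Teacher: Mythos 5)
Your upper-bound half is essentially recoverable, with two corrections. In the case $x_1=y_1$ you do not need the ``excursions outside the copy do not help'' claim at all: the distance in $\mathrm{HM}_n$ is automatically at most the distance inside the induced copy of $\mathrm{HM}_{n-1}$ (more edges can only shorten paths), which already gives $2(n-2)+\mathrm{diam}(G)$, strictly below the target; consequently your ``matching lower bound inside a copy'' is both irrelevant to the diameter and would require an unproved isometric-embedding claim. In the cross-copy case, asserting that ``a geodesic naturally decomposes'' is backwards for an upper bound: you must \emph{construct} a path. The paper does this directly, without induction, by fixing a self-map $p$ of $\Sigma$ with $(x,p(x))\in E(G)$, decomposing each postfix into maximal same-type blocks $\tilde{\un x}=\un b_1\dots \un b_r$, $\tilde{\un y}=\un c_1\dots \un c_q$, and collapsing one block per step (the edge $(\un t\,\un z,\ \un t\,p(\un z))$ exists whenever $\un z$ has a type), reaching pure-type words in $r-1$ and $q-1$ steps and joining them coordinatewise in at most $\mathrm{diam}(G)$ steps; note that your induction buys nothing here, since the cross-copy case is not reduced to a smaller instance of the same statement.

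The genuine gap is your key lemma for the lower bound. The proposed accounting --- ``any path must spend at least a prescribed number of steps resolving each disagreeing coordinate'' --- is false in this model: a single edge of $\mathrm{HM}_n$ changes \emph{all} coordinates beyond the common prefix simultaneously, and indeed two vertices that disagree in \emph{every} coordinate can be adjacent (take two pure-type words of complementary types whose coordinate pairs are edges of $G$). For the same reason, choosing a witness pair merely so that ``every coordinate must be corrected'' is insufficient: two such fully-disagreeing pure-type words are at distance at most $\mathrm{diam}(G)$, nowhere near $2(n-1)+\mathrm{diam}(G)$. The correct potential function, which the paper uses, is the number of maximal same-type blocks in the word: each step along any path changes this block count by at most one; the first digit can change only at steps between two pure-type words (block count $1$), and then only to a $G$-neighbor. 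Hence one takes witnesses whose letters \emph{alternate in type at every position} (block count $n$) and whose first digits are at distance $\mathrm{diam}(G)$ in $G$; then at least $n-1$ block-collapsing steps are needed on each side before any crossing is possible, plus at least $\mathrm{diam}(G)$ crossing steps for the first digit, and these three sets of steps are disjoint, giving $2(n-1)+\mathrm{diam}(G)$. Without replacing your coordinate-by-coordinate accounting by this block/type-alternation invariant, the crux of the proof is missing.
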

The proof can be found in the Appendix. Its heuristics is as follows: between any two vertices with names $\underline x=(x_1\dots x_n), \underline y=(y_1\dots y_n)$ one can construct a path by gradually changing the coordinates of the names starting from the end of the name. In total, one needs to change all the coordinates of $\underline x$ and $\underline y$ at most once (using $2(n-1)$ edges) in order to reach the same copy of the base graph $G$. In this copy, one needs to take at most $\mathrm{diam}(G)$ steps to connect the two paths.

Recall $\ell_k$ from \eqref{ellk-bkn}. The following lemma gives an upper bound on $N_B^n(\ell_k)$, the number of boxes needed to cover $\mathrm{HM}_n$ with boxes of diameter at most $\ell_k$. 

\begin{lemma}[Upper bound on the number of boxes]
\label{lemma_upper} For all $k\ge n$, $N_B^n(\ell_k)=1$, while for all $n>k$,
\be \label{ubl} N_B^n(\ell_k) \leq N^{n-k}
\ee
\end{lemma}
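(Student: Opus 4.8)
The plan is to split along the dichotomy in the statement, handling the easy regime $k \ge n$ first and then constructing an explicit covering in the regime $n > k$ by exploiting the self-similarity recorded in Remark \ref{hierarch}. In both cases it suffices to exhibit a valid covering, since we are only after an upper bound on the \emph{minimum} number of boxes.

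For $k \ge n$ I would simply argue that the whole graph $\mathrm{HM}_n$ is a single admissible box. By Lemma \ref{diam_lem} the intrinsic diameter is $\mathrm{diam}(\mathrm{HM}_n) = 2(n-1) + \mathrm{diam}(G)$, which is nondecreasing in the index, so $\mathrm{diam}(\mathrm{HM}_n) \le \mathrm{diam}(\mathrm{HM}_k) = \ell_k - 1$. Taking $H = \mathrm{HM}_n$ in Definition \ref{lbox} gives $d_H(u,v) \le \mathrm{diam}(\mathrm{HM}_n) \le \ell_k - 1$ for all $u,v$, so $\mathrm{HM}_n$ is itself an $\ell_k$-box and $N_B^n(\ell_k) = 1$.

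For $n > k$ the key idea is to partition the vertex set $\Sigma_n$ according to the first $n-k$ letters of each word. For a fixed prefix $\un a = (a_1 \dots a_{n-k})$, let $H_{\un a}$ be the induced subgraph on all words $\un a \un w$ with $\un w \in \Sigma_k$. Iterating Remark \ref{hierarch} exactly $n-k$ times, each $H_{\un a}$ is isomorphic as a graph to $\mathrm{HM}_k$. Since there are $N^{n-k}$ distinct prefixes, these blocks partition $\mathrm{HM}_n$ into $N^{n-k}$ induced subgraphs. Because $H_{\un a} \cong \mathrm{HM}_k$, the intrinsic distances inside a block coincide with those of $\mathrm{HM}_k$, so $d_{H_{\un a}}(u,v) \le \mathrm{diam}(\mathrm{HM}_k) = \ell_k - 1$; hence each $H_{\un a}$ is a legitimate $\ell_k$-box. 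This produces a covering of $\mathrm{HM}_n$ by $N^{n-k}$ boxes, yielding $N_B^n(\ell_k) \le N^{n-k}$.

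The main point requiring care is the claim $H_{\un a} \cong \mathrm{HM}_k$, obtained by iterating the one-level hierarchical statement. I would verify this by induction on $n-k$, checking that fixing one more initial coordinate preserves both criteria (a) and (b) of the edge rule in Definition \ref{edgeset_def}: for two words sharing the common prefix $\un a$, the first point of disagreement lies strictly beyond $\un a$, so the postfix types and the per-coordinate edge conditions reduce verbatim to those of the corresponding shorter words in $\mathrm{HM}_k$. The only genuine subtlety is that the box distance is measured \emph{within} the induced subgraph rather than in the ambient $\mathrm{HM}_n$; but the isomorphism with $\mathrm{HM}_k$ settles this, since intrinsic distances are preserved under graph isomorphism and the ambient (possibly shorter) distances are never needed for the upper bound.
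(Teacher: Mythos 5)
Your proposal is correct and follows essentially the same route as the paper: partition the vertices of $\mathrm{HM}_n$ according to their first $n-k$ letters, observe that each of the $N^{n-k}$ resulting blocks is a copy of $\mathrm{HM}_k$ and hence an $\ell_k$-box of intrinsic diameter $\mathrm{diam}(\mathrm{HM}_k)=\ell_k-1$, and conclude $N_B^n(\ell_k)\le N^{n-k}$. Your additional care in treating the $k\ge n$ case explicitly and in noting that box distances are intrinsic to the induced subgraph merely makes explicit what the paper's proof leaves implicit.
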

\begin{proof}
 Recall that by construction, $\mathrm{HM}_n$ consists of $N^{n-k}$ copies of $\mathrm{HM}_k$. Indeed, each vertex in $\mathrm{HM}_n$ has a code of length $n$, where each letter in the code is in $\{0,\dots, N-1\}$. Let us define the $\ell_k$-boxes as follows: every vertex, starting with the same word of length $n-k$, constitutes to one box. This box is a copy of $\mathrm{HM}_{k}$ by the definition of the model. There are $N^{n-k}$ possible ways to start an $n$-length code, hence the number of boxes is $N^{n-k}$.
The diameter of each box is then $\mathrm{diam}(\mathrm{HM_k})=\ell_k-1$ per definition, hence, these are proper $\ell_k$-boxes.
\end{proof}

 We continue giving lower bounds. Note that lower bounds are not that easy, since the `long' edges connecting different copies of $\mathrm{HM_k}$ within $\mathrm{HM_n}$ might allow for a better boxing than using the directly observable hierarchical structure, see Figures  \ref{fig:fan} and \ref{fig:boxcherry}. First we investigate the case $k=1$, i.e., $\ell=\ell_1=\mathrm{diam}(G)$.

\begin{lemma}[Lower bound on $ N_B^n(\ell_1)$] \label{lemma_lower}
For all $n \geq n_1+1,$
\be
N_B^n(\ell_1) \geq N^{n-n_1},
\ee where $n_q:=|V_q|$, $q\in\{1,2\}$ and we assume that $n_1 \leq n_2$ without loss of generality.
\end{lemma}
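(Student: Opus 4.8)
The plan is to exhibit a set $S$ of $N^{n-n_1}$ vertices no two of which can lie in a common $\ell_1$-box, which immediately gives $N_B^n(\ell_1)\ge |S|=N^{n-n_1}$. Recall that $\mathrm{HM}_n$ decomposes into $N^{n-n_1}$ copies of $\mathrm{HM}_{n_1}$, indexed by the prefix $(x_1\dots x_{n-n_1})$. In each copy I would pick one \emph{peripheral} representative: a word $\underline x=(x_1\dots x_n)$ whose last $n_1$ coordinates alternate in type, i.e.\ $\mathrm{typ}(x_i)\neq \mathrm{typ}(x_{i+1})$ for every $i\in\{n-n_1,\dots,n-1\}$; such a word exists because both $V_1,V_2$ are nonempty and $n\ge n_1+1$. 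Since an $\ell_1$-box is a subgraph $H$ of diameter at most $\ell_1-1=\mathrm{diam}(\mathrm{HM}_1)=\mathrm{diam}(G)$, and $d_H\ge d_{\mathrm{HM}_n}$ for any subgraph $H$, it suffices to show that two representatives lying in \emph{different} copies are at graph distance strictly larger than $\mathrm{diam}(G)$. Then each box meets $S$ in at most one vertex, and covering $S$ alone already forces $N^{n-n_1}$ boxes.

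The engine of the distance bound is a monotonicity property of the type pattern along edges, which I would isolate as a short lemma, as this is the only place where the precise edge rule (a)--(b) enters. For a word $\underline x$ let $c(\underline x)$ denote the number of \emph{type changes}, i.e.\ the number of indices $i$ with $\mathrm{typ}(x_i)\neq \mathrm{typ}(x_{i+1})$. By (a)--(b) an edge with common prefix length $k$ replaces a pure-type postfix $(x_{k+1}\dots x_n)$ by the opposite pure type while fixing the first $k$ coordinates; hence it flips the type-change indicator at exactly the junction position $k$ (and leaves $c$ unchanged when $k=0$). Thus each edge alters $c$ by at most one, and does so by toggling a single position. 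Consequently, if $N_{>k}$ counts the type changes located in positions $k+1,\dots,n-1$, then $N_{>k}$ changes by $\pm1$ precisely at those edges whose junction lies in $\{k+1,\dots,n-1\}$, and is unaffected by all other edges.

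Now fix representatives $\underline x,\underline y$ in distinct copies, with common prefix length $k\le n-n_1-1$, so that $x_{k+1}\neq y_{k+1}$ and, by the alternating choice, both words carry type changes at all of $\{n-n_1,\dots,n-1\}\subseteq \{k+1,\dots,n-1\}$; hence $N_{>k}\ge n_1$ at each endpoint. To ever change the letter in coordinate $k+1$ one must traverse an edge with junction $\le k$, which by the edge rule requires the postfix from $k+1$ to be pure, i.e.\ $N_{>k}=0$ at that instant. Since $N_{>k}$ starts at $\ge n_1$, must descend to $0$, and must return to $\ge n_1$, its total variation is at least $2n_1$, forcing at least $2n_1$ edges with junction in $\{k+1,\dots,n-1\}$; together with at least one further edge at junction $\le k$ that actually alters coordinate $k+1$, this yields $d_{\mathrm{HM}_n}(\underline x,\underline y)\ge 2n_1+1$. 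Finally, a shortest path in the connected bipartite graph $G$ alternates between $V_1$ and $V_2$ through distinct vertices, so it meets $V_1$ in at least $\lfloor (\mathrm{diam}(G)+1)/2\rfloor$ vertices, giving $\mathrm{diam}(G)\le 2n_1$. Hence $d_{\mathrm{HM}_n}(\underline x,\underline y)\ge 2n_1+1>\mathrm{diam}(G)$, as needed. I expect the main obstacle to be exactly this frontier bookkeeping: one must argue rigorously that the type-change frontier has to be swept all the way down past position $k$ and back up, and that no edge can resolve more than one type change at a time, since the \emph{long} edges between copies of $\mathrm{HM}_{n_1}$ are precisely what could a priori shortcut the count and invalidate the independent-set bound.
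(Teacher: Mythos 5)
Your proof is correct and takes essentially the same route as the paper's: the identical witness set (one representative per length-$(n-n_1)$ prefix whose last $n_1+1$ letters alternate in type, exactly the paper's $\un v\,\un z_x$), the same observation $\mathrm{diam}(G)\le 2n_1$ for connected bipartite $G$, and the same pairwise-distance bound $2n_1+1$. Your counter $N_{>k}$ is precisely the paper's block count of the postfix (shifted by one), so your total-variation bookkeeping is just a self-contained formalization of the block-counting step that the paper delegates to the Appendix proof of Lemma \ref{diam_lem}.
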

\begin{proof}

We start observing that $\mathrm{diam}(G) \leq 2n_1$ since we assumed that G is bipartite and connected.
It is enough to show that we can find $N^{n-n_1}$ \emph{witness} vertices in $\mathrm{HM}_n$ for all $n \geq n_1+1$, such that the pairwise distances between these witnesses are greater than $2n_1$ (hence greater than $\mathrm{diam}(G))$ so they all must be in distinct $\ell_1$-boxes\footnote{By the definition of diameter, in any given copy of $\mathrm{HM_1}$ there are two vertices that are at distance $\mathrm{diam}(G)$ from each other, but it is unclear that once having many copies of $\mathrm{HM}_1$, how far are vertices in different copies  of $\mathrm{HM_1}$ from each other, allowing for a possibly better boxing.}.

First we investigate the case when $n=n_1+1$. In this case we need $N^{n-n_1}=N$ witnesses. For each base letter $\{0,1,\dots, N-1\}:=[N]$ we construct one witness vertex. 
Recall from Def.~\ref{edgeset_def} that the type of a letter $x\in [N]$ is $i\in\{1,2\}$ if the vertex $x\in G$ is in partition $V_i, i\in\{1,2\}$. 
We say that a vertex $\un z=\un z_x$ is a witness for $x\in [N]$ if its code starts with $x$ and the consecutive letters keep alternating the type, i.e., in case $x$ was type $1$ than the next letter is type $2$, then again type $1$ and so on. 
Formally, let us find a $\underline z_x=(z_1, \dots, z_n)$ a witness for $x$ that has $z_1=x$ and  typ($z_j$) $\neq$ typ($z_j+1$) for all $j\le n$.
Let us pick an arbitrary $\un z_{x}$ witness for every $x\in [N]$. 

We explain that this collection of vertices is a good witness set, i.e., the distance between any two of them is at least $\ell_1+1$.
To see this, consider $\un z_x$ and $\un z_y $ for $x\neq y$ and note that the codes have no prefix in common ($|\un{z}_{x} \wedge \un{z}_{y}|=0$) and alternating types later on. By point {\bf(a)} after Def.~\ref{edgeset_def}, an edge is between two codes if they start with some common prefix and their postfixes have a type that is different for the two ends of the edge. Since the types of the letters in $\un z_x, \un z_y$ are alternating, any path that tries to connect them needs to change the postfixes $2n_1$ times, once for each length, starting from $\un z_x$ and once for each length starting from the code $\un z_y$.
This means in total at least $2n_1$ in-between vertices, that is, i.e., $2n_1+1$ edges\footnote{More formally, one can apply the construction of the shortest path between any two vertices, explained in the Appendix in the Proof of Lemma \ref{diam_lem}, here, $q=r=i=n_1+1$ in the notation of the proof of Lemma \ref{diam_lem}, thus we need at least $r-1+q-1+1=2n_1+1$ steps on any path between $\un{z}_{x}$ and $\un{z}_{y}$.}. Hence, the distance between $\un z_x, \un z_y$ is at least $2n_1+1$.
Using that $\mathrm{diam}(G) \leq 2n_1$ these witnesses must be in distinct $\ell_1$-boxes, so we need at least $N$ $\ell_1$-boxes to cover $\mathrm{HM}_{n_1+1}$. This proves the lemma for $n=n_1+1$.

Next, we extend this procedure for arbitrary $n \geq n_1+1$. Let $n=n_1+1+j$ for some $j>0$. Recall the hierarchical structure, i.e., the fact that $\mathrm{HM}_{n_1+1+j}$ consists of $N^j$ copies of $\mathrm{HM}_{n_1+1}$. Note also that $\Sigma_j=[N]^j$, all possible words of length $j$.
In $\mathrm{HM}_{n_1+1+j}$ the corresponding witnesses can be chosen as follows: For every $\un{v}\in \Sigma_j$, we define $N$ witnesses that are the concatenation of $\un v$ with the witnesses above, i.e., $\un{v}\un{z}_x\in \Sigma_{n_1+1+j}$ for all $x \in \Sigma$. In words, this means that we find our original $N$ witnesses $(\un z_x)_{x\in[N]}$ in every copy of $\mathrm{HM}_{n_1+1}$ that is embedded within $\mathrm{HM}_{n_1+1+j}$. This way we created $N\cdot N^j=N^{n-n_1}$ witnesses. Thus, once we confirm that their pairwise distance is at least $\ell_1=\mathrm{diam}(G)$, the proof is finished by noting that all of them must be in separate boxes and hence  $N_B^n(\ell_1) \geq N^{n-n_1}$.

To investigate the pairwise distance between the witnesses, we distinguish two cases:  either two witnesses are in the same copy of $\mathrm{HM}_{n_1+1}$, or not. In the first case, the code of the two witnesses is of the form $\un v\un z_x$ and $\un v \un z_y$ for some $\un v \in \Sigma_j, x,y\in \Sigma$.
We have shown in the previous paragraph that the distance between $\un z_x$ and $\un z_y$ is at least $\ell_1$ for all $x\neq y$, i.e., the witnesses within the \emph{same} copy of  $\mathrm{HM}_{n_1+1}$ must be in separate $\ell_1$-boxes.
The pairwise distance between any witnesses $\un v \un z_x$ and $\un v' \un z_{x'}$ for $\un v \neq \un v'$ is also at least $\mathrm{diam}(G)$, by the same argument as the proof of Lemma \ref{lemma_lower}: any path trying to connect them needs to change the types of the postfixes at least $2n_1$ times, yielding at least $2n_1$ in-between vertices and $2n_1+1$ edges.
\end{proof}

The next lemma extends Lemma \ref{lemma_lower} for $\ell_k$.
\begin{lemma}[Lower bound on $N_B^n(\ell_k)$]\label{lemma_lower2}
Using the notation of the previous lemma, the following inequality holds if $n-k \geq n_1:$  
\be\label{eq:lower-nk} N_B^n(\ell_k) \geq N^{n-k+1-n_1}=N^{n-k}\cdot C,\ee
where $C=N^{1-n_1}$ is a fixed constant determined by the base graph $G$.
\end{lemma}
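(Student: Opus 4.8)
The plan is to generalize the witness argument of Lemma~\ref{lemma_lower}: I will produce $N^{n-k+1-n_1}$ vertices of $\mathrm{HM}_n$ whose pairwise graph distance strictly exceeds $\ell_k-1=\mathrm{diam}(\mathrm{HM}_k)$. Since an $\ell_k$-box has diameter at most $\ell_k-1$, no two such witnesses can lie in a common box, so every box contains at most one of them and therefore $N_B^n(\ell_k)\geq N^{n-k+1-n_1}=N^{n-k}\cdot C$ with $C=N^{1-n_1}$, which is the assertion.

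To build the witnesses I lengthen the type-alternating suffix used in Lemma~\ref{lemma_lower} from $n_1+1$ to $n_1+k$, so as to gain the extra $2(k-1)$ of distance that separates $\ell_k$ from $\ell_1$. For each base letter $x\in[N]$ fix one type-alternating word $\un z_x=(z_1\dots z_{n_1+k})$ with $z_1=x$ and $\mathrm{typ}(z_{j+1})\neq\mathrm{typ}(z_j)$ for all $j$ (cf.\ Def.~\ref{edgeset_def}); then for every prefix $\un v\in\Sigma_{n-n_1-k}$ declare $\un v\un z_x\in\Sigma_n$ to be a witness. The hypothesis $n-k\geq n_1$ makes $n-n_1-k\geq 0$, so the prefix length is admissible, and the number of witnesses is $N^{n-n_1-k}\cdot N=N^{n-k+1-n_1}$. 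Because a single $\un z_x$ is fixed per first letter $x$, two distinct witnesses either differ somewhere inside $\un v$ or share $\un v$ and have $x\neq x'$; in either case they first differ at a coordinate $p+1\leq n-n_1-k+1$.

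The core step is the distance bound $d_{\mathrm{HM}_n}(\un v\un z_x,\un v'\un z_{x'})\geq\ell_k$. Below the first differing coordinate $p+1$ the two witnesses carry a type-alternating block of length at least $n_1+k-1$, namely (a tail of) the suffix $\un z$. Invoking the shortest-path construction from the proof of Lemma~\ref{diam_lem} (exactly as the footnote of Lemma~\ref{lemma_lower} does), any path altering coordinate $p+1$ is only admissible once the whole postfix beneath it is of pure type, by criterion (a) after Def.~\ref{edgeset_def}; hence the alternating block must be monochromatized before the crossing and rebuilt afterwards, costing at least $n_1+k-1$ type-changing steps near each of the two endpoints, plus at least one further step for the base-graph crossing. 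This yields $d_{\mathrm{HM}_n}(\un v\un z_x,\un v'\un z_{x'})\geq 2(n_1+k-1)+1$, and since $\mathrm{diam}(G)\leq 2n_1$ for our bipartite connected $G$, the right-hand side is at least $\mathrm{diam}(G)+2(k-1)+1=\mathrm{diam}(\mathrm{HM}_k)+1=\ell_k$ by Lemma~\ref{diam_lem}.

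I expect the bookkeeping in this last step to be the main obstacle: one must argue that the $n_1+k-1$ unwinding steps on each side are genuinely forced regardless of where the witnesses diverge, and that this cost cannot be amortized across the two endpoints or absorbed into the base-graph crossing. This is precisely what the shortest-path analysis of Lemma~\ref{diam_lem} supplies, so I would lean on that construction rather than re-derive distances by hand. Granting it, counting the witnesses and converting pairwise distance into the box lower bound is routine.
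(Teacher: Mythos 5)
Your proposal is correct and follows essentially the same route as the paper: the paper also takes witnesses of the form $\un v\,\un z_x\un w_x$ with $\un v\in\Sigma_{n-n_1-k}$ and a type-alternating suffix of total length $n_1+k$ (built by appending a length-$(k-1)$ alternating word $\un w_x$ to the Lemma~\ref{lemma_lower} witnesses), counts them as $N^{n-k+1-n_1}$, and derives the pairwise distance bound $2(n_1+k-1)+1\ge \ell_k$ from the same forced type-unwinding of the postfix via rule \textbf{(a)} and the path analysis of Lemma~\ref{diam_lem}. The only cosmetic difference is that you define the alternating suffix in one piece rather than as an extension of the earlier witnesses, and you treat the two divergence cases (inside $\un v$ versus at the first suffix letter) uniformly where the paper separates them.
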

\begin{proof} We start by switching variables. Let $i:=n-k+1$.
Apply Lemma \ref{lemma_lower} with this $i$, to see that $N_B^i(\ell_1)\geq N^{i-n_1}$ for all $i \geq n_1+1$. There, we created $N^{i-n_1}$ vertices in $\mathrm{HM}_i$ with pairwise distance greater than $\mathrm{diam}(G)$. It is enough to show that we can find the same number of witnesses (i.e.,  $N^{i-n_1}$ many) in $\mathrm{HM}_{k+i-1}=\mathrm{HM}_n$ for all $k \geq 1$, such that the pairwise distances between them is at least (by Lemma \ref{diam_lem})
\be\label{eq:ellk} \ell_k=\mathrm{diam}(\mathrm{HM}_k)+1=2(k-1)+\mathrm{diam}(G)+1,\ee this implies \eqref{eq:lower-nk}. 
Recall also that $\mathrm{diam}(G)\le 2 n_1$. Hence it is enough to show that the pairwise distance is at least $2(k-1)+2n_1+1$. 

Now we create the $N^{i-n_1}$ many witnesses. For every witness $\un v \un z_x$ in $\mathrm{HM}_i$ that we created in the proof of Lemma \ref{lemma_lower}, we define a witness in $\mathrm{HM}_n$:
Continue the code of a witness $\un v\un z_x$  in a way that the type is changed at every character (otherwise arbitrarily), obtaining the word $\un v\un z_x \un w_x $. One needs $n$ letters in total so that the concatenated word $\un w_x$ is of length $k-1$ with alternating types. Recall also that $\un z_x$ has length $n_1+1$. As a result, any witness vertex has $k-1+n_1+1$ many characters of alternating types at the end of its code.

It is left to show that the pairwise distance between any two vertices  is at least $2(k-1)+2n_1+1 \ge \ell_k$. 
There are two cases: namely, either the common prefix is of length $i$ or not. In the first case, the distance between $\un v\un z_x \un w_x$ and $\un v \un z_y \un w_y$ is at least $2 (k-1+n_1)+1\ge \ell_k$. This can be seen by the same argument as in the proof of Lemma \ref{lemma_lower}. Namely, any path that tries to connect two of these witnesses must change the type of the postfix at least $k-1+n_1$ times on both sides of the path and one needs an extra edge in the middle (since $x\neq y$), obtaining the required distance. 

When the common prefix is shorter, then $\un v\neq \un v'$, and the witnesses are of the form $\un v \un z_x \un w_x$ and $\un v' \un z_y\un w_y$ with possibly $x=y$. 
In this case point {\bf(a)} after Def.~\ref{edgeset_def}  applies and even if $x=y$, one needs to change the letters in the postfix one-by-one to obtain a postfix of length $n_1+1+k-1$ that has a type starting from both codes. This is at least $2(n_1+1+k-1)$ changes again, and there is at least $1$ extra edge necessary since the common prefix is shorter than $i$ characters. As a result the distance is again at least $\ell_k$.
\end{proof}

Now we can prove the existence of the transfinite fractal dimension of the hierarchical graph sequence model and determine the value of $\tau$.
\begin{proof}[Proof of Theorem \ref{hier_tau}]
The first statement follows since $N_B(\ell)/N$ in this case is not polynomial but exponential in $\ell$ by Lemma \ref{lemma_lower2}. For the  transfinite fractal dimension, we note that it is enough to determine a subsequential limit in $\ell$ along the sequence $\ell_k =\mathrm{diam}(\mathrm{HM}_k)+1$ since  $N_B^n(\ell)$ is monotone decreasing in $\ell$. Hence we have
$$\tau \left( \left\{ \mathrm{HM}_n\right\}_{n\in \mathbb N} \right) = \lim_{\ell \to \infty} \lim_{n \to \infty} \frac{\log \left(N_B^n(\ell) / |\mathrm{HM}_n| \right)}{-\ell}
= \lim_{k \to \infty} \lim_{n \to \infty} \frac{\log \left(N_B^n(\ell_k)/ |\mathrm{HM}_n| \right)}{-\ell_k}.$$
Recall that $|\mathrm{HM}_n|=N^n$ and from Lemma \ref{diam_lem} we have $\ell_k=\mathrm{diam}(G) -1+ 2k$ from \eqref{eq:ellk}. Using Lemma \ref{lemma_lower2} we give an upper bound on $\tau$:

\begin{align}\label{eq:upper}
\tau \left( \left\{ \mathrm{HM}_n\right\}_{n\in \mathbb N} \right) &\leq  \lim_{k \to \infty} \lim_{n \to \infty} \frac{\log\left( N^n\right) - \log \left( N^{n-k+1-n_1}\right)}{\mathrm{diam}(G) -1+ 2k} \nonumber \\
&= \lim_{k \to \infty} \frac{(k-1+n_1)\log N}{\mathrm{diam}(G) -1+ 2k} = \frac{\log N}{2}.
\end{align}
Similarly, Lemma  \ref{lemma_upper} yields a lower bound on the value of $\tau$:
\be \label{eq:lower}
 \tau \left( \left\{ \mathrm{HM}_n\right\}_{n\in \mathbb N} \right) \geq (\log N)/2.
\ee
Combining \eqref{eq:upper} and \eqref{eq:lower} we can conclude that $\tau \left( \left\{ \mathrm{HM}_n\right\}_{n\in \mathbb N} \right) = (\log N)/2$.
\end{proof}

\section{Song-Havlin-Makse model}
\label{sec:shm}

In this section, we analyze the model proposed by Song, Havlin and Makse in \cite{song2006origins} to generate graphs with and without fractal scaling of Eq. \eqref{powerbox}.
The motivation of the model is that the main feature that seems to distinguish the fractal networks is an effective ``repulsion'' (dissortativity) between nodes with high degree (hubs),
this idea was first suggested by Yook \emph{et al.} based on empirical evidence \cite{yook2005self} and developed by Song \emph{et al.} with analytical and modeling confirmations \cite{song2006origins}.
To put in other words, the most connected vertices tend to not be directly linked with each other but they prefer to link with less-connected nodes. In contrast, in case of non-fractal networks,
hubs are primarily connected to hubs. The model $\left\{\mathrm{SHM}^p_n \right\}_{n\in \mathbb N}$ defined below can capture the main features (e.g. scale-free \cite{molontay2015fractal}) of real-world networks and the presence of fractal scaling is governed by a
parameter of the model.

\begin{itemize}
	\item \textbf{Initial condition:} We start at $n=0$ with an arbitrary connected simple graph of a few vertices (e.g. a star shape of five nodes as in Figure \ref{fig:modes}).
	\item \textbf{Growth:} At each time step $n+1$ we link $m\cdot\mathrm{deg}_n(v)$ new vertices to every $v$ vertex that is already present in the network, where $m >1$ is an input parameter and $\mathrm{deg}_n(v)$ is the degree of vertex $v$ at time $n$.
	\item \textbf{Rewiring edges:} At each time step $n+1$ we rewire the already existing edges as a stochastic combination of Mode I (with probability $p$) and Mode II (with probability $1-p$)
	\begin{itemize}
		\item \textbf{Mode I:} we keep the old edge generated before time $n+1$.
	  \item \textbf{Mode II:} we substitute the edge $(u,v)$ generated in one of the previous time steps by a link between newly added nodes, i.e., by an edge $(u', v')$, where $u'$ and $v'$ are newly added neighbors of $u$ and $v$ respectively, as shown in Figure \ref{fig:modes}.
	\end{itemize}
\end{itemize}

\begin{figure}[h]
	\centering
		\includegraphics[width=90mm]{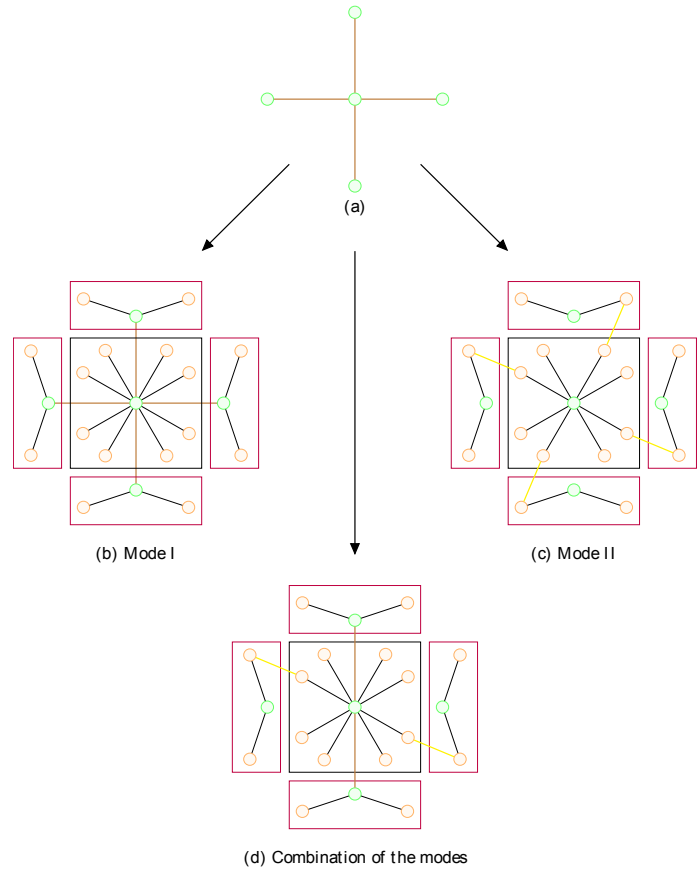}
	\caption{Different modes of growth with $m=2$. In (a) the initial stage is illustrated, (b), (c) and (d) demonstrates Mode I, Mode II and the combination of the two modes respectively. The figure was adapted by the authors from \cite{song2006origins}.}
	\label{fig:modes}
\end{figure}

The model evolves by linking new nodes to already existing ones as follows: those nodes that appeared in the earlier stages form the hubs in the network.
Consequently, Mode I leaves the direct edges between the hubs leading to hub-hub attraction, i.e. there are edges between vertices with high degrees. On the contrary Mode II leads to hub-hub repulsion or anticorrelation. It is interesting to investigate
how the connection mode affects the fractality of the model, what happens if only one of the modes is used ($p=0$ or $p=1$) or the combination of the two modes (i.e. $p \in (0,1)$).

Here we note that using only Mode I ($p=1$) results in a tree (assuming that the initial graph contained no cycles) but this tree is not a rooted locally finite tree in contrast with the trees considered in Section \ref{boxingtrees}. The model with parameter $p=0$ (i.e. using only Mode II) has a finite box dimension with $d_B= \log(2m+1)/\log 3$~\cite{song2006origins}. On the other hand, with parameter $p=1$ the box dimension is not finite but the  transfinite fractal dimension is still a valid parameter, that is the content of the next result:

\begin{theorem}\label{modeI}
The graph sequence $\left\{ \mathrm{SHM}^1_n\right\}_{n\in \mathbb N}$ is not fractal, but transfractal. That is, its fractal dimension (as in Def.~\ref{boxdim}) does not exists, while its transfinite fractal dimension (as in Def.~\ref{modboxdim}) exists and equals 
\be \label{tau_value-shm}
\tau \left( \left\{ \mathrm{SHM}^1_n\right\}_{n\in \mathbb N} \right)= \log (2m+1).
\ee
Further, the diameter of the network generation model with parameter $p=1$ is
\be \label{shm-diam}\mathrm{diam}(\mathrm{SHM}^1_n)=\Theta(n).
\ee
\end{theorem}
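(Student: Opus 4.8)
The plan is to treat the two claims of Theorem~\ref{modeI} separately, and to handle the box-covering through matching upper and lower bounds, in the same spirit as the proof of Theorem~\ref{hier_tau}. I begin by recording the two basic growth quantities. Writing $N_n=|\mathrm{SHM}^1_n|$ and $E_n=|E(\mathrm{SHM}^1_n)|$, the growth rule (each vertex $v$ receives $m\cdot\mathrm{deg}_n(v)$ new leaves, and Mode~I keeps every old edge) gives $E_{n+1}=(2m+1)E_n$ and $N_{n+1}=N_n+2mE_n$. Hence $E_n=(2m+1)^nE_0$ and $N_n=N_0-E_0+E_0(2m+1)^n$, so that $\log|\mathrm{SHM}^1_n|=n\log(2m+1)+O(1)$. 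This is exactly the denominator that enters Definition~\ref{modboxdim}.

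Next I would dispose of the diameter claim \eqref{shm-diam}, which is the easy half. Since with $p=1$ no edge is ever rewired and each new vertex is a leaf, $\mathrm{SHM}^1_n$ is a tree, and hanging the generation-$(n+1)$ leaves creates no shortcuts, so distances between older vertices are preserved. If $u,v$ realise $\mathrm{diam}(\mathrm{SHM}^1_n)$ and $u',v'$ are new leaves attached to $u,v$ (these exist, as every vertex spawns at least $m\ge1$ children), then $d(u',v')=\mathrm{diam}(\mathrm{SHM}^1_n)+2$, and no pair can be farther; thus $\mathrm{diam}(\mathrm{SHM}^1_{n+1})=\mathrm{diam}(\mathrm{SHM}^1_n)+2$. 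A one-line induction gives $\mathrm{diam}(\mathrm{SHM}^1_n)=\mathrm{diam}(\mathrm{SHM}^1_0)+2n=\Theta(n)$.

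For the box-covering I would exploit the \emph{onion} structure: the generation-$n$ vertices are precisely the leaves of $\mathrm{SHM}^1_n$, and deleting them returns $\mathrm{SHM}^1_{n-1}$. The upper bound follows from a peeling construction: take an optimal covering of the core $\mathrm{SHM}^1_{n-1}$ by boxes of diameter $\le\ell-3$, and absorb each last-generation leaf into a box containing its (core) parent. As a leaf lies at distance $1$ from its parent, each box grows in diameter by at most $2$ and stays a valid $\ell$-box, while every vertex gets covered; this yields $N_B^n(\ell)\le N_B^{n-1}(\ell-2)$, which iterates to single-vertex boxes and gives $N_B^n(\ell)\le N_{n-k}$ with $k=\lfloor(\ell-1)/2\rfloor$. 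The lower bound comes from a packing argument: with $M_n(s)$ the largest set of vertices of $\mathrm{SHM}^1_n$ pairwise at distance $\ge s$, I would select, for each member of an optimal distance-$(s-2)$ packing of $\mathrm{SHM}^1_{n-1}$, one distinct generation-$n$ leaf-child; because leaves have degree one, the chosen leaves are pairwise at distance (parent distance)$+2\ge s$, so $M_n(s)\ge M_{n-1}(s-2)$, with base case $M_n(2)\ge\#\{\text{generation-}n\text{ leaves}\}\asymp(2m+1)^n$. Since vertices at distance $\ge\ell$ cannot share an $\ell$-box, $N_B^n(\ell)\ge M_n(\ell)$, which iterates to the same order $(2m+1)^{n-\lfloor(\ell-1)/2\rfloor}$ as the upper bound.

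Combining the two bounds shows $N_B^n(\ell)/|\mathrm{SHM}^1_n|$ decays \emph{exponentially} in $\ell$ rather than polynomially, so the box dimension of Definition~\ref{boxdim} cannot exist while the limit of Definition~\ref{modboxdim} does; this already gives transfractality. The main obstacle is the quantitative heart of the matter: extracting the exact constant in the exponent and feeding it into \eqref{tau_lim} to produce the value in \eqref{tau_value-shm}. I would do this along the natural subsequence $\ell_k:=\mathrm{diam}(\mathrm{SHM}^1_k)+1$ (exactly as in Theorem~\ref{hier_tau}), matching the peeling upper bound against the packing lower bound there, and then using monotonicity of $N_B^n(\ell)$ in $\ell$ to pass from the subsequence to the full limit. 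The delicate point — and the place where the precise prefactor in front of $\log(2m+1)$ is decided — is the rate at which one unit of box-diameter trades against a peeled generation, i.e.\ the interplay between the additive diameter growth $\mathrm{diam}(\mathrm{SHM}^1_k)=\Theta(k)$ and the multiplicative vertex growth $|\mathrm{SHM}^1_k|=(2m+1)^{\Theta(k)}$; tracking this constant carefully is where I expect the real work to lie.
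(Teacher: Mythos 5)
Your architecture coincides with the paper's proof step for step: the same counts ($E_{n+1}=(2m+1)E_n$, hence $V(n)=\Theta\bigl((2m+1)^n\bigr)$), the same diameter recursion $\mathrm{diam}(\mathrm{SHM}^1_{n+1})=\mathrm{diam}(\mathrm{SHM}^1_n)+2$ giving \eqref{shm-diam}, an upper bound that is the recursive form of the paper's covering (the paper covers $\mathrm{SHM}^1_n$ by the descendant-balls of the vertices present $k+1$ steps earlier, giving $N_B^n(\ell_k)\le V(n-k-1)$; your peeling step $N_B^n(\ell)\le N_B^{n-1}(\ell-2)$ iterates to the same bound), and a lower bound that is the paper's witness argument (seeds at time $n-k-\left\lceil\mathrm{diam}(\mathrm{SHM}_0)/2\right\rceil$, each with one generation-$n$ descendant as witness; your leaf-packing recursion $M_n(s)\ge M_{n-1}(s-2)$ is the same mechanism, since in both cases the tree structure forces any connecting path through the two seeds). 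If anything, your formulation is marginally cleaner, since your bounds hold for every $\ell$ rather than only along the subsequence $\ell_k=\mathrm{diam}(\mathrm{SHM}^1_k)+1$, so you do not even need the monotonicity argument the paper invokes.

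The genuine gap is that you stop exactly at the step the theorem is about and defer it as ``the real work''. It is not: your two-sided bound already gives $N_B^n(\ell)/|\mathrm{SHM}^1_n|=\Theta\bigl((2m+1)^{-\lfloor(\ell-1)/2\rfloor}\bigr)$ for all large $n$, whence by Def.~\ref{modboxdim}
\[
\lim_{\ell\to\infty}\lim_{n\to\infty}\frac{\log\bigl(N_B^n(\ell)/|\mathrm{SHM}^1_n|\bigr)}{-\ell}
=\lim_{\ell\to\infty}\frac{\lfloor(\ell-1)/2\rfloor\log(2m+1)+O(1)}{\ell}=\frac{\log(2m+1)}{2},
\]
a one-line computation with no further work hidden in it. Note that this is \emph{half} the value claimed in \eqref{tau_value-shm}; and the same factor appears if one evaluates the paper's own closing display, since $\log\bigl((2m+1)^{n-k-1}/(2m+1)^n\bigr)/(-\ell_k)$ with $\ell_k=\mathrm{diam}(\mathrm{SHM}_0)+2k+1$ tends to $\log(2m+1)/2$, not $\log(2m+1)$. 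The value $\log(2m+1)/2$ is also the one consistent with the rest of the paper: Theorem \ref{hier_tau} yields $(\log N)/2$ from the structurally identical bounds $N_B^n(\ell_k)\asymp N^{n-k}$, $\ell_k\sim 2k$, and the $d$-ary tree of growth rate $d$ has $\tau=(\log d)/2$; the SHM Mode~I graph is a tree whose vertex count multiplies by $2m+1$ per generation while its diameter grows by $2$, so the same trade-off applies. In short: had you executed the limit instead of postponing it, your (correct) bounds would have produced $\log(2m+1)/2$ and exposed this factor-of-two tension with the stated \eqref{tau_value-shm}. As written, your proposal establishes non-fractality and transfractality, but it does not prove the claimed value of $\tau$ --- which is precisely the quantitative content you left open.
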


The first half of the assertion above was claimed in \cite{song2006origins} with heuristic explanation, here we give a more analytical argument and prove that the model is transfractal. 

\begin{proof}[Proof of Theorem \ref{modeI}]
Let $V(n)$ and $E(n)$ denote the number of nodes and edges in the network at time $n$, respectively.
Observe that $V(n)$ is deterministic, namely, it satisfies the recursion $V(n) = V(n-1) + 2m E(n-1)$. Assuming that $E(0)=V(0)=c$ for a fixed constant $c$ then
\be\label{shm-nov} V(n)=(2m+1)^n \cdot c. \ee

Combining \eqref{shm-nov} with \eqref{shm-diam} yields that  $\left\{ \mathrm{SHM}^p_n\right\}_{n\in \mathbb N}$ with parameter $p=1$ (i.e. using only Mode I) leads to a small-world network, i.e., the diameter of the graph grows proportionally to the logarithm of the number of  vertices.

Now, we show \eqref{shm-diam}.
If we use only Mode I ($p=1$), the diameter increases by 2 in every step, thus $\mathrm{diam}(\mathrm{SHM}^1_{n+1})=\mathrm{diam}(\mathrm{SHM}^1_n)+2$, i.e. $\mathrm{diam}(\mathrm{SHM}^1_k)=\mathrm{diam}(\mathrm{SHM}_0) +2k$.  

 In order to handle fractality we should examine the boxing of the network.
Note that Mode I yields to a tree-like structure: assuming that the initial graph was a tree, the network is a tree itself. Clearly only one $\ell_k$-box is enough to cover $\mathrm{SHM}^1_n$ if $n < k+1$.
Following from the hierarchical structure of the model, we cover $\mathrm{SHM}^1_n$  with $V(n-k-1)$ $\ell_k$-boxes, with $\ell_k:=\mathrm{diam}(\mathrm{SHM}^1_k)+1$ if $n-k-1 \geq 0$. Namely, an appropriate boxing of $\mathrm{SHM}^1_n$ with $\ell_k$-boxes if we choose the centers of the boxes as the vertices generated $(k+1)$ steps ago. Let $N^n_B(\ell_k)$ denote the minimum number of $\ell_k$-boxes needed to cover $\mathrm{SHM}^1_n$, so we have
\be \label{upper}
N^n_B(\ell_k) \leq \begin{cases} V(n-k-1) &\mbox{if } n \geq k+1\\
1 & \mbox{if } n < k+1. \end{cases}
\ee
Next, we turn to a lower bound.
In case of covering $\mathrm{SHM}^1_n$ with $\ell_k$-boxes, with $\ell_k = \mathrm{diam}(\mathrm{SHM}^1_k)+1$, we find $V\left(n-k-\left\lceil \mathrm{diam}(\mathrm{SHM}_0)/2\right\rceil\right)$ witness vertices such that
the pairwise distances between the vertices are greater than $\mathrm{diam}(\mathrm{SHM}^1_k)$. 
Namely, at time $n$, consider the vertices generated at time $n-k-\left\lceil \mathrm{diam}(\mathrm{SHM}_0)/2\right\rceil$ ago, and call these seeds. For each seed vertex, choose a \emph{descendant} of this vertex that was generated at step $n$ with distance of $k+\left\lceil \mathrm{diam}(\mathrm{SHM}_0)/2\right\rceil$ from the seed.  From the tree structure, any path that connects two of these witnesses must go to the seed of the witness vertex first. Hence, the path connecting two witnesses is at least $2(k+\left\lceil \mathrm{diam}(\mathrm{SHM}_0)/2\right\rceil) +1$ long, that is at least $\ell_k$, since  $2(k+\left\lceil \mathrm{diam}(\mathrm{SHM}_0)/2\right\rceil) +1 \geq 2k+\mathrm{diam}(\mathrm{SHM}_0) +1 = \mathrm{diam}(\mathrm{SHM}^1_k) +1 = \ell_k$. Hence, we have

\be \label{lower}
N^n_B(\ell_k) \geq \begin{cases} V\left(n-k-\left\lceil \mathrm{diam}(\mathrm{SHM}_0)/2\right\rceil\right) &\mbox{if } n \geq k+\left\lceil \mathrm{diam}(\mathrm{SHM}_0)/2\right\rceil\\
1 & \mbox{if } n < k+\left\lceil \mathrm{diam}(\mathrm{SHM}_0)/2\right\rceil. \end{cases}
\ee
 Therefore, combining \eqref{upper} and \eqref{lower} we can conclude that $N^t_B(\ell_k) = \Theta\left(V(n-k)\right)$,
this together with the exponential growth of $V(n)$ and the linear grow of $\mathrm{diam}(\mathrm{SHM}^1_n)= \mathrm{diam}(\mathrm{SHM}_0)+2n = \Theta(n) $ yields that no finite
$d_B$ exists in the sense of Def.~\ref{boxdim}, thus Mode I leads to a small-world non-fractal topology.

On the other hand, we can consider the transfinite fractal dimension $\tau$ along the subsequence of box sizes $\ell_k = \mathrm{diam}(\mathrm{SHM}^1_k)+1=\mathrm{diam}(\mathrm{SHM}_0)+2k+1$ due to the monotonicity of $N_B^n(\ell)$ in $\ell$. It is clear from \eqref{upper} and from \eqref{lower} combined with the fact that $V(n)=\Theta\left((2m+1)^n\right)$ that

$$\tau\left(\left\{ \mathrm{SHM}^1_n\right\}_{n\in \mathbb N}\right) =  \lim_{k \to \infty} \lim_{n \to \infty} \frac{\log\left( (2m+1)^{n-k-1}/ (2m+1)^n \right)}{-\ell_k} = \log(2m+1). $$
We can conclude that the transfinite fractal dimension of the graph sequence $\{\mathrm{SHM}^1_n\}$ is $\log (2m+1)$.
\end{proof}

\begin{remark}
Unfortunately, our current techniques are not able to handle the case when Mode II is also present. Heuristically, Mode II destroys this exponential growth so much that the growth rate becomes polynomial. As further research, it would be interesting to study rigorously the interpolation between these two very different growth rates by studying where the phase transition takes place between the two regimes. We pose the following open question: Is there such a $p_c \in [0,1]$ such that $\tau \left( \left\{ \mathrm{SHM}^p_n\right\}_{n\in \mathbb N} \right)$ exists and nonzero for all $p \geq p_c$ while the usual box dimension exists for all $p < p_c$? It was claimed in \cite{song2006origins} with heuristic explanation that $p_c=1$.
\end{remark}


\section{Boxing of trees and connection to the growth rate}
\label{boxingtrees}

In this section, we calculate the transfinite fractal dimension $\tau$ for some rooted evolving trees and compare it to the value of growth rate defined by Lyons and Peres \cite{lyons2016probability}. We find that the transfinite fractal dimension of infinite trees is strongly related to the growth rate.  While growth rate is defined only on trees, our concept of transfinite fractal dimension is defined on any graph sequences. 
Next, we define the growth rate, and to be able to do so, we need some notation.

Let us denote an infinite tree by $T_{\infty}$, and its root by $\varrho$. We assume that  the degree of each vertex is finite. 
Let $\CL_n$ denote the set of vertices at distance $n$ from the root, and its size by $L_n$. 
\begin{definition}[Growth rate of trees, \cite{lyons2016probability}]
\label{def::growthrate_def} The growth rate of an infinite tree  $T_\infty$ is defined as
\begin{equation}\label{def::gr}
\mathrm{gr} \left(T_\infty\right) =\lim_{n \to \infty} L_n^{1/n},
\end{equation}
whenever this limit exists.
\end{definition}

Note that an infinite tree needs infinitely many boxes of any size. To be able to define a proper transfinite dimension, we need to `chop off' the tree to make it finite. We also provide some definition of basic expression related to trees.
\begin{definition}[Basic definitions]Consider a rooted and infinite tree $T_{\infty}$.
We obtain a sequence $\left\{ T_n\right\}_{n\in \mathbb N}$ from $T_{\infty}$ by truncating at height $n$:
\be T_n:=\cup_{i\le n}\CL_i\ee
We define the (transfinite) fractal dimension of an infinite tree as the (transfinite) fractal dimension of the graph sequence $(T_n)_{n\ge 1}$.

The \emph{generation} of a vertex is its graph distance from the root. The \emph{subtree} of a vertex $v$, denoted by $T^{(v)}$, is defined as the vertices $w$ that have the property that the shortest path to the root passes through $v$. The descendants of $v$ are the vertices in $T^{(v)}$. We write $T_k^{(v)}=\{w\in T^{(v)}: d(v,w)\le k\}.$ The vertices in $T_1^{(v)}\setminus\{v\}$ are called the children of $v$.

\end{definition}
Observe that the diameter of $T_k^{(v)}$ is at most $2k$, hence, $T_k^{(v)}$ is a $2k+1$-box.

We shall use the notation introduced in the definition above throughout the rest of paper. In the rest of this section we investigate the optimal boxing of various trees for certain box sizes -- along the subsequence $\ell_k:=2k+1$ and we write $N_B^n(\ell_k)$ for the minimal number of $\ell_k$-boxes that we need to cover $T_n$.
We compare the transfinite fractal dimension and the growth rate of some trees. Let us start with some examples; later we will generalise them to spherically symmetric trees.

\begin{example}[Complete $d$-ary tree]\label{d-ary}
 A complete $d$-ary tree $T_{\infty}^d$ is a rooted tree where each vertex has exactly $d$ ($d \geq 2$) children.
\end{example}

\begin{example}[``2-3''-tree]\label{2-3}
 A ``2-3''-tree $T_{\infty}^{2,3}$ is a rooted tree such that vertices at even distances from the root have 2 children while all other vertices have 3 children \cite{lyons2016probability}.
\end{example}

An important tool for  the boxing of trees is the greedy boxing method.

\begin{definition}[Greedy boxing starting from the leaves]\label{greedy} Let  $n+1:= a (k+1) +b$ for some $a,b \in \mathbb{N}, b < k+1.$ We define the greedy boxing of a rooted tree $T_n$ with $2k+1$-boxes as follows: 
\be \mathrm{Greedy}(n,k):=\left(\bigcup_{i=1}^{a} \bigcup_{v\in \CL_{n+1-i(k+1)}} T_k^{(v)} \right)\cup T_{b-1}^{(\varrho)}.\ee
That is, every vertex $v$ at generation $n+1-i(k+1)$, for $i= 1, \dots, a$ and its subtree  $T_{k}^{(v)}$ forms one box, and the box of the root might be somewhat smaller. When $b=0$ the box of $\varrho$ is included in the first union hence the last box ($T_{b-1}^{(\varrho)}$) is not there in the expression.
\end{definition}

\begin{figure}[h]
	\centering
		\includegraphics[width=120mm]{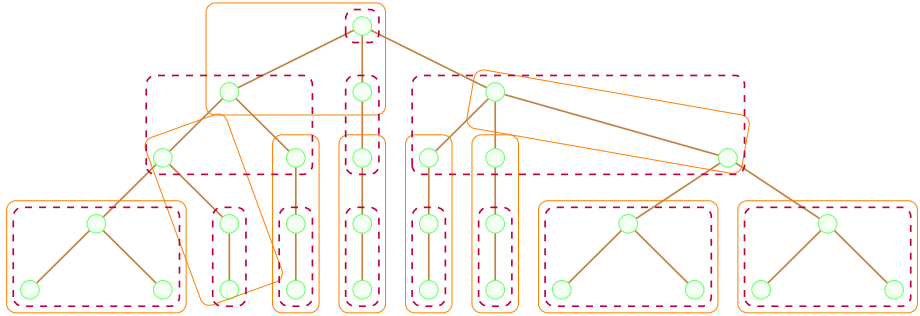}
	\caption{The boxing of rooted trees. The dashed boxes illustrates the greedy boxing and the orange boxes show an optimal boxing with box size $\ell=3$. Using the notation of Def.~\ref{greedy}, here $n=4$, $k=1$, $a=2$, $b=1$.}
	\label{fig:greedy}
\end{figure}

 \begin{lemma}\label{claim:greedy} Consider a rooted tree $T_n$, where each vertex not in generation $n$ has at least one child.  $N_B^n(\ell_k)$, the minimal number of $(2k+1)$-boxes, satisfies
\be L_{n-k}\le N_B^n(\ell_k)\le \mathrm{G}(n,k)\le \sum_{i=0}^{n-k}L_{i},\ee
where $\mathrm{G}(n,k)$ is the number of boxes in the greedy boxing, and $L_{i}$ is the size of generation $i$.
\end{lemma}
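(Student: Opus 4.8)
The statement is a chain of three inequalities $L_{n-k} \le N_B^n(\ell_k) \le \mathrm{G}(n,k) \le \sum_{i=0}^{n-k} L_i$, and the plan is to treat each bound separately, working in the (only meaningful) regime $n\ge k$, for which all three quantities are defined.

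For the lower bound $L_{n-k} \le N_B^n(\ell_k)$, I would exhibit a set of $L_{n-k}$ \emph{witness} vertices whose pairwise graph distances all strictly exceed $2k = \ell_k - 1$, so that no two can lie in a common $(2k+1)$-box and hence at least $L_{n-k}$ boxes are required. Concretely, for each of the $L_{n-k}$ vertices $u \in \CL_{n-k}$ I would select one descendant $w_u \in \CL_n$ at generation $n$; such a descendant exists because, by hypothesis, every vertex outside generation $n$ has at least one child, so the path from $u$ can be extended $k$ further steps. For $u \ne u'$ the witnesses $w_u, w_{u'}$ lie in the disjoint subtrees rooted at $u$ and $u'$, so their lowest common ancestor coincides with that of $u, u'$ and sits at some generation $g \le n-k-1$. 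Then $d(w_u, w_{u'}) = 2(n-g) \ge 2(k+1) = 2k+2 > 2k$, which is exactly the required separation.

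For the middle inequality $N_B^n(\ell_k) \le \mathrm{G}(n,k)$, I would simply verify that $\mathrm{Greedy}(n,k)$ from Definition~\ref{greedy} is a legitimate cover of $T_n$ by $(2k+1)$-boxes; minimality of $N_B^n(\ell_k)$ then gives the bound. Two checks are needed: each piece is a valid box, and the pieces exhaust $V(T_n)$. The first is the observation already recorded above that $\mathrm{diam}(T_k^{(v)}) \le 2k$, so every $T_k^{(v)}$ (and the smaller $T_{b-1}^{(\varrho)}$) is a $(2k+1)$-box. For the second, I would note that the $i$-th layer of subtrees is rooted at generation $n+1-i(k+1)$ and hence covers exactly the generations in $[\,n+1-i(k+1),\, n-(i-1)(k+1)\,]$; a short computation shows these windows tile contiguously as $i$ runs from $1$ to $a$, reaching down to generation $b$, after which $T_{b-1}^{(\varrho)}$ covers generations $0$ through $b-1$ (and when $b=0$ the $i=a$ layer already contains the root, matching the convention in Definition~\ref{greedy}).

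Finally, for $\mathrm{G}(n,k) \le \sum_{i=0}^{n-k} L_i$ I would count the boxes: layer $i$ uses $L_{n+1-i(k+1)}$ boxes and the root contributes one further box when $b>0$. Since the layer generations $\{n+1-i(k+1): 1\le i\le a\}$ are distinct elements of $\{b,\dots,n-k\}$, which lies in $\{1,\dots,n-k\}$ when $b>0$ (resp. in $\{0,\dots,n-k\}$ when $b=0$), and the single extra root box is bounded by $L_0 = 1$, summing gives at most $\sum_{i=0}^{n-k} L_i$. I expect the lower bound to be the only genuinely delicate step — the subtlety is precisely the one flagged in the footnotes of Section~\ref{model}, namely ruling out that far-apart-looking vertices could nonetheless share a box; here the tree structure makes the common-ancestor distance computation clean, so the real care is just in choosing the witnesses deep enough (at generation $n$ rather than at $n-k$) to force the strict inequality $d > 2k$.
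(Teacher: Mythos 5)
Your proposal is correct and matches the paper's proof in all essentials: the lower bound via one witness per vertex of $\CL_{n-k}$ chosen at generation $n$ (your lowest-common-ancestor computation $d(w_u,w_{u'})=2(n-g)\ge 2k+2$ is the same estimate the paper writes as $d(w(u),u)+d(w(v),v)+2=2k+2$), the middle inequality from the validity of the greedy boxing together with minimality, and the last inequality by counting the layer roots against $\sum_{i=0}^{n-k}L_i$. Your explicit verification that the greedy layers tile the generations contiguously is merely a more careful spelling-out of what the paper leaves implicit.
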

\begin{proof}
The fact that $N_B^n(\ell_k)\le \mathrm{G}(n,k)$ follows from the optimality of $N_B^n(\ell_k)$. The last inequality is established by observing that $\mathrm{G}(n,k)$ uses vertices in $\CL_{n-k}$ to cover generations $n-k, \dots, n$. The sum of earlier generations is a somewhat crude upper bound, since not all vertices in every generation before $n-k$ are put in a separate box. 
  
 For the lower bound, we find $L_{n-k}$ witness vertices, that is vertices with pairwise distance at least $2k+1$ away. We choose a witness vertex from each box $T_k^{(v)}$, $v\in \CL_{n-k}$: let $w(v)$ be any of the vertices in $T_k^{(v)}$ that is in generation $n$. 
We show that when $u\neq v$ are two distinct witnesses, then $d(w(u), w(v))\ge 2k+1$. Indeed, since $w(u), w(v)$ are in different subtrees in generation $n-k$, the shortest path between $w(u), w(v)$ travels through both $u$ and $v$ and so it contains at least $d(w(u),u)+d(w(v),v)+2=2k+2$ edges.
\end{proof}

\begin{theorem}

The complete $d$-ary tree and ``2-3''-tree have finite  transfinite fractal dimension, namely $\tau\left(T_{\infty}^{d}\right)=(\log d) / 2$ and $\tau\left(T_{\infty}^{2,3}\right)=(\log\sqrt6) / 2$.
The growth rate of these trees are $d$ and $\sqrt6$ respectively.
\end{theorem}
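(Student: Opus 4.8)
The plan is to reduce both claims to the single two-sided bound of Lemma~\ref{claim:greedy}, after first writing down the generation sizes $L_i$ explicitly. Both trees satisfy the hypothesis of that lemma (every internal vertex has $2$ or $3$ children, so each vertex not in the last generation has at least one child), hence for every $k$ and $n$ we may use
\be
L_{n-k}\le N_B^n(\ell_k)\le \sum_{i=0}^{n-k}L_i, \qquad \ell_k=2k+1.
\ee
The growth-rate assertions then fall out directly from the definition \eqref{def::gr}, while the values of $\tau$ follow by inserting these bounds into Definition~\ref{modboxdim} along the subsequence $\ell_k=2k+1$.

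First I would treat the complete $d$-ary tree. Here $L_i=d^i$, so $\mathrm{gr}(T_\infty^d)=\lim_n (d^n)^{1/n}=d$ immediately. For the dimension, both sides of the displayed bound are geometric: the lower bound is $d^{n-k}$ and the upper bound is $\sum_{i=0}^{n-k}d^i=\Theta(d^{n-k})$, whence $N_B^n(\ell_k)=\Theta(d^{n-k})$ with multiplicative constants independent of both $n$ and $k$. Since $|T_n|=\sum_{i=0}^n d^i=\Theta(d^n)$, the ratio satisfies $N_B^n(\ell_k)/|T_n|=\Theta(d^{-k})$. Taking logarithms, the $n\log d$ terms cancel in the limit $n\to\infty$, leaving $\lim_n \log\bigl(N_B^n(\ell_k)/|T_n|\bigr)=-k\log d+O(1)$ with the error bounded uniformly in $k$; dividing by $-\ell_k=-(2k+1)$ and letting $k\to\infty$ yields $(\log d)/2$.

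For the ``$2$-$3$''-tree the only new feature is that the generation sizes oscillate: $L_{2m}=6^m$ and $L_{2m+1}=2\cdot 6^m$, so that $L_n=c_n(\sqrt6)^n$ with $c_n\in\{1,2/\sqrt6\}$ bounded away from $0$ and $\infty$. The growth rate is then $\lim_n L_n^{1/n}=\sqrt6$, the bounded prefactor being harmless since $c_n^{1/n}\to 1$. For $\tau$ I would run exactly the same computation: $L_{n-k}$ and $\sum_{i=0}^{n-k}L_i$ are both $\Theta\bigl((\sqrt6)^{n-k}\bigr)$, and $|T_n|=\Theta\bigl((\sqrt6)^n\bigr)$, so $N_B^n(\ell_k)/|T_n|=\Theta\bigl((\sqrt6)^{-k}\bigr)$ and the identical limiting procedure gives $(\log\sqrt6)/2$.

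Two points deserve care, and I expect the bookkeeping around them to be the only real work. First, in the ``$2$-$3$'' case the oscillating factors $c_n$ must be shown to be absorbed into the $\Theta$-constants \emph{uniformly in $k$} before the logarithm is taken, so that they do not contribute a $k$-dependent drift; this is immediate because $c_n$ ranges over a finite set, but it is the step where the argument departs from the clean geometric case. Second, I would justify that computing $\tau$ along the subsequence $\ell_k=2k+1$ recovers the full limit in $\ell$: since $N_B^n(\ell)$ is monotone decreasing in $\ell$ and consecutive $\ell_k$ differ by $2$, any even value $\ell=2k$ is sandwiched by $N_B^n(2k+1)\le N_B^n(2k)\le N_B^n(2k-1)$, so the subsequential limit and the full limit coincide, exactly as in the proofs of Theorems~\ref{hier_tau} and~\ref{modeI}.
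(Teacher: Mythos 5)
Your proposal is correct and follows essentially the same route as the paper's own proof: both apply the two-sided bound of Lemma~\ref{claim:greedy} along the subsequence $\ell_k=2k+1$, use the explicit (geometric, respectively oscillating-geometric) generation sizes $L_i$, and invoke monotonicity of $N_B^n(\ell)$ in $\ell$ to pass from the subsequence to the full limit. If anything, you spell out two points the paper leaves implicit --- the uniformity in $k$ of the multiplicative constants and the sandwich argument justifying the subsequence --- so no gap remains.
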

\begin{proof}
A direct application of Def.~\ref{def::growthrate_def} yields that $ \mathrm{gr} \left( T_{\infty}^d \right) = d$. For the  transfinite fractal dimension, by monotonicity it is enough to consider the subsequence $2k+1$ for $\ell$. Applying Lemma \ref{claim:greedy} for this case yields that 
\be d^{n-k} \le N_B^n(\ell_k) \le \sum_{i=0}^{n-k} d^{i} = \frac{d^{n-k+1}-1}{d-1}.\ee
Because $\left|V(T_n^d)\right| = (d^{n+1}-1)/(d-1)$, using the bounds from Lemma  \ref{claim:greedy} we can calculate the limit
$$\tau\left(T_d^\infty\right) = \lim_{k \to \infty} \lim_{n \to \infty} \frac{\log( N_B^n(\ell_k)/\left|V(T_n)\right|)}{-(2k+1)} = \frac{\log d}{2}.$$
The proof works similarly for the ``2-3''-tree. It is shown in \cite{lyons2016probability} that $ \mathrm{gr} \left( T_{\infty}^{2,3} \right) = \sqrt6$. Elementary calculation shows that
\be \left|V(T_n^{2,3})\right| = \begin{cases}
3 \cdot (6^{(n+1)/2}-1)/5 \quad &\text{ if $n$ is odd}\\
3 \cdot (6^{n/2}-1)/5+6^{n/2} \quad &\text{ if $n$ is even.}
\end{cases} \ee
A simple calculation yields that 
\be L_n= \begin{cases}
2 \cdot 6^{(n-1)/2}\quad  &\text{if $n$ is odd}\\
6^{n/2} \quad &\text{if $n$ is even.}
\end{cases} \ee
Combining the results above with Lemma  \ref{claim:greedy} yields that
$$ \tau\left( T_\infty^{2,3}\right) = (\log\sqrt6) / 2. $$
\end{proof}

\noindent Observe that for these examples, the relation  $\tau\left( T_\infty \right)  = \ln{ \mathrm{gr} \left( T_\infty \right)  } / 2$ holds. The question naturally arises that under which conditions it is true that this relation is valid? We will answer this question in the following sections.

\subsection{Spherically symmetric trees}
\label{spherical}

Let  $T_{\infty}^{\mathbf f}$ be a spherically symmetric tree, that is an infinite rooted tree such that for each $h$, every vertex at distance $h$ from the root has the same number of children, namely,  $f(h)$ many, $f(h) \in \mathbb{N}^+=\{1,2,\dots\}$. Examples \ref{d-ary} and \ref{2-3} are also spherically symmetric trees.
For spherically symmetric trees, $L_n= \prod_{h=0}^{n-1}f(h)$ and $\left|T_n^f\right|=\sum_{i=0}^n L_i$. 

In what follows, we investigate the assumptions needed on the sequence  $\mathbf f=(f(h)_{h\in \mathbb N})$ so that the  transfinite fractal dimension of a spherically symmetric tree exists and equals the half of the logarithm of the growth rate. This question is nontrivial, as demonstrated by the following examples. 

\begin{example} \label{ex1}
Let $$f(i) = \begin{cases} \left\lfloor e^j \right\rfloor  & \text{ if $ i=j!$ for some j} \\ \text{ fixed } c \in \mathbb{N}  & \text{ otherwise.}\end{cases}$$
\end{example}
\begin{example} \label{ex2}
Let
 $$\log f(h) := \begin{cases} a \quad  & \text{ if $ m(m+1)<h \leq (m+1)^2$ for some $m \in \mathbb{N}$} \\ b \neq a  \quad  & \text{ otherwise.}\end{cases}$$
 \end{example}
In Example \ref{ex2} we have blocks of $'a'$s and blocks of $'b'$s with linearly increasing lengths, while in Example \ref{ex1} $f$ takes on a very large value `occasionally'. We show below that while the growth rate of these trees exists, the transfractal dimension is not even defined, i.e., the limit in Def.~\ref{modboxdim} does not exists. First, we need the following definition.

\begin{definition}[Length of the maximum contiguous subsequence of the same element]
Let $\mathbf{s}=\left(s_i \right)_{i = 0}^{\infty}$ be an arbitrary sequence with codomain\footnote{The codomain of a sequence is the set into which all of the elements of the sequence is constrained to fall.} $S$ and $a\in S$. The length of the maximum contiguous subsequence of sequence $\mathbf s$ with respect to element $a$ (i.e., the length of the longest block of consecutive $'a'$s:
$$ \mathrm{lmcs}(\mathbf s,a):= \max\{n \in \mathbb N \mid \exists i\in \mathbb N: a=s_i=s_{i+1}=s_{i+2}=\dots =s_{i+n}\}+1.  $$
\end{definition}

\begin{condition}[Regularity assumption]\label{cond::reg}
Let us assume that $\mathbf f=(f(h))_{h\ge 0}$ in a spherically symmetric tree $T_\infty^{\mathbf f}$ satisfies for some $K\in \mathbb{N}$ that
\be  \mathrm{lmcs}(\mathbf f,1)=K  <\infty .\ee
\end{condition}
\begin{lemma} \label{lemma_c1}
Condition \ref{cond::reg} implies that for all $n\ge 1$ it holds that
\be\label{eq:generation-bound} L_n\le |T_n^{\mathbf f}|=\sum_{i=0}^n L_i \le  2(\mathrm{lmcs}(\mathbf f,1) +1)\cdot L_n, \ee
that is, the total size of the tree is the same order as the size of the last generation.
\end{lemma}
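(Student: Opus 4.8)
The plan is to dispatch the lower bound by inspection and to reduce the upper bound to summing a geometric series whose decay rate is furnished by Condition~\ref{cond::reg}. The lower bound $L_n \le \sum_{i=0}^n L_i$ is immediate, since each $L_i = \prod_{h=0}^{i-1} f(h)$ is a product of positive integers and hence at least $1$, so the full sum dominates its final term.

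For the upper bound I would normalize by $L_n$. Setting $\rho_j := L_{n-j}/L_n$ for $j = 0, 1, \dots, n$ gives $\rho_0 = 1$ and $\sum_{i=0}^n L_i = L_n \sum_{j=0}^n \rho_j$, so it suffices to show $\sum_{j=0}^n \rho_j \le 2(K+1)$ where $K := \mathrm{lmcs}(\mathbf f, 1)$. Because $L_{h+1} = f(h) L_h$ with $f(h) \ge 1$, the sequence $(L_i)_i$ is non-decreasing, whence each $\rho_j \le 1$ and $(\rho_j)_j$ is non-increasing. The engine of the argument is the identity
\be
\frac{\rho_{j+K+1}}{\rho_j} = \prod_{h=n-j-K-1}^{n-j-1} \frac{1}{f(h)},
\ee
a product of exactly $K+1$ consecutive reciprocals. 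Here Condition~\ref{cond::reg} enters decisively: since the longest block of consecutive $1$'s in $\mathbf f$ has length $K$, no $K+1$ consecutive values of $f$ can all equal $1$, so at least one factor above is $\le 1/2$. Thus $\rho_{j+K+1} \le \rho_j/2$ for every admissible $j$.

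With geometric decay in hand, I would partition $\{0, 1, \dots, n\}$ into consecutive blocks of length $K+1$. On the $m$-th block, monotonicity gives $\rho_j \le \rho_{m(K+1)}$, and iterating the decay bound from $\rho_0 = 1$ gives $\rho_{m(K+1)} \le 2^{-m}$. Summing block by block then yields
\be
\sum_{j=0}^n \rho_j \le \sum_{m \ge 0} (K+1)\, 2^{-m} = 2(K+1),
\ee
and multiplying through by $L_n$ produces $\sum_{i=0}^n L_i \le 2(\mathrm{lmcs}(\mathbf f,1)+1)\, L_n$, as claimed.

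The only genuinely delicate point is the index bookkeeping that aligns the window of $K+1$ reciprocals in the displayed ratio with the run-length bound $\mathrm{lmcs}(\mathbf f,1)=K$; the monotonicity and the geometric summation are routine. One should also note that the last block may be shorter than $K+1$ and that the decay step requires $j+K+1\le n$, but since $(\rho_j)_j$ is non-increasing these truncations only decrease the left-hand sum and therefore do not threaten the bound.
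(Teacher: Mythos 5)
Your proof is correct and follows essentially the same route as the paper: both express the ratios $L_i/L_n$ as products of reciprocals $f(h)^{-1}$ and use Condition~\ref{cond::reg} to extract a factor of at most $1/2$ from every window of $K+1$ consecutive indices, then sum the resulting geometric series to obtain the constant $2(K+1)$. Your recursion $\rho_{j+K+1}\le \rho_j/2$ is merely a repackaging of the paper's bound $\prod_{h=i}^{n-1} f^*(h)^{-1}\le (1/2)^{\lfloor (n-i)/(K+1)\rfloor}$, with your blockwise summation playing the role of the paper's observation that each value of the floor exponent is attained at most $K+1$ times.
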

\begin{proof}[Proof of Lemma \ref{lemma_c1}]
The first inequality of the lemma is trivial.
Now we express $L_i$  in terms of $L_n$:

\be\label{eq:gen_ub} L_i =  L_n \prod_{h=i}^{n-1} f(h)^{-1}  \le L_n \prod_{h=i}^{n-1} f^*(h)^{-1}, \ee
where we define $\mathbf f^\star:=\mathbf f^\star(\mathbf f)$ as 
$$f^*(h) := \begin{cases}
1,  & \text{if } f(h)=1\\
2,  & \text{if } f(h) \ge 2.
\end{cases}$$
By denoting $\lfloor x \rfloor:=\sup\{y\in \mathbb Z: y\le x\}$ the lower integer part of a real number $x$, and $K:=\mathrm{lmcs}(\mathbf f,1)$, 
\be\label{eq:f_ub}
\prod_{h=i}^{n-1} f^*(h)^{-1} \le \left( \frac{1}{2}\right)^{\left\lfloor (n-i)/(K+1) \right\rfloor}
\ee
since the product is maximized when $\mathbf f^\star$ is such that $K$ 1's are followed by a single $2$ periodically.
Combining \eqref{eq:gen_ub} and  \eqref{eq:f_ub} yields that
\be
\sum_{i=0}^n L_i \le L_n \left( 1 + \sum_{i=0}^{n-1}\left( \frac{1}{2}\right)^{\left\lfloor (n-i)/(K+1) \right\rfloor} \right) 
  \le L_n (K+1) \sum_{i=0}^\infty 2^{-i},\ee

\noindent since $\left\lfloor(n-i)/(K+1)\right\rfloor$ takes on each integer $ \le \left\lfloor n/(K+1) \right\rfloor$ at most $K+1$ times. The sum of the geometric series is at most $2$ and thus \eqref{eq:generation-bound} is established. 

\end{proof}

\begin{remark}
Condition \ref{cond::reg} is not only sufficient but necessary for Lemma \ref{lemma_c1}. If $\mathrm{lmcs}(\mathbf f,1) > M,$ $\forall M \in \mathbb{N}$, then for some $n$, $f(n-M) = \dots = f(n) = 1$; and then $\sum_{i=n-M}^n L_i =  (M+1) L_n.$ 

\end{remark}

\begin{corollary} \label{cor:tau}
Under Condition \ref{cond::reg} the transfinite fractal dimension and growth rate of a spherically symmetric tree $T_{\infty}^f$ can be expressed as follows, if the limits exist:
\begin{align}\label{eq::tau-1}
\tau\left( T_{\infty}^f\right)&=
\lim_{k \to \infty} \lim_{n \to \infty} \frac{\sum_{h=n-k}^{n-1}\log f(h)}{(2k+1)},\\
\log \left(\mathrm{gr} \left( T_{\infty}^f\right)\right) &= \lim_{n \to \infty} \frac1n  \sum_{h=0} ^{n-1} \log f(h).\label{eq::gr-1}
\end{align}
\end{corollary}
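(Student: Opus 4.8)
The plan is to treat the two identities separately, since the growth-rate formula is essentially definitional while the expression for $\tau$ requires combining the greedy-boxing estimate with the regularity bound of Lemma \ref{lemma_c1}.

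For the growth rate, I would simply take logarithms in Def.~\ref{def::growthrate_def}. Since $L_n=\prod_{h=0}^{n-1}f(h)$ for a spherically symmetric tree, we have $\log L_n=\sum_{h=0}^{n-1}\log f(h)$, and therefore, using continuity of the logarithm at the (positive) growth rate and the assumed existence of the limit,
$$\log\mathrm{gr}\left(T_\infty^{\mathbf f}\right)=\lim_{n\to\infty}\tfrac1n\log L_n=\lim_{n\to\infty}\tfrac1n\sum_{h=0}^{n-1}\log f(h).$$
This yields \eqref{eq::gr-1} directly.

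For $\tau$, I would first invoke monotonicity of $N_B^n(\ell)$ in $\ell$ to restrict the outer limit to the subsequence $\ell_k=2k+1$, exactly as in the proof of Theorem \ref{hier_tau}. The heart of the argument is then to show $N_B^n(\ell_k)/|T_n^{\mathbf f}|=\Theta(L_{n-k}/L_n)$ with constants uniform in $n$. Lemma \ref{claim:greedy} already sandwiches $L_{n-k}\le N_B^n(\ell_k)\le\sum_{i=0}^{n-k}L_i$, and Lemma \ref{lemma_c1} (which uses Condition \ref{cond::reg}) upgrades the upper sum to $\sum_{i=0}^{n-k}L_i\le 2(\mathrm{lmcs}(\mathbf f,1)+1)L_{n-k}$; applying the same lemma to the denominator gives $L_n\le|T_n^{\mathbf f}|\le 2(\mathrm{lmcs}(\mathbf f,1)+1)L_n$. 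Writing $K:=\mathrm{lmcs}(\mathbf f,1)$, which is finite by Condition \ref{cond::reg}, dividing these bounds yields
$$\frac{L_{n-k}}{2(K+1)L_n}\le\frac{N_B^n(\ell_k)}{|T_n^{\mathbf f}|}\le\frac{2(K+1)L_{n-k}}{L_n}.$$
Since $L_{n-k}/L_n=\prod_{h=n-k}^{n-1}f(h)^{-1}$, taking logarithms gives
$$\frac{\log(N_B^n(\ell_k)/|T_n^{\mathbf f}|)}{-(2k+1)}=\frac{\sum_{h=n-k}^{n-1}\log f(h)}{2k+1}+\frac{O(1)}{2k+1},$$
where the additive error is bounded in absolute value by $\log(2(K+1))$, independently of $n$ and $k$.

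The only delicate point — and precisely the reason Condition \ref{cond::reg} is imposed — is that this additive error is \emph{uniform} in $n$: were $\mathrm{lmcs}(\mathbf f,1)$ infinite, the constant relating $\sum_{i\le n-k}L_i$ to $L_{n-k}$ could grow with $n$, and the error term might escape control. Granting finiteness of $K$, I would send $n\to\infty$ first (the inner limit of the main term existing by assumption, as it is the inner limit appearing on the right-hand side of \eqref{eq::tau-1}) and then $k\to\infty$; the fixed error contributes at most $\log(2(K+1))/(2k+1)$, which vanishes in the second limit, leaving \eqref{eq::tau-1}. I expect the bookkeeping of the two nested limits, together with verifying that this uniform error genuinely disappears, to be the main though essentially routine obstacle.
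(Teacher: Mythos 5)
Your proposal is correct and follows essentially the same route as the paper: you combine the sandwich of Lemma \ref{claim:greedy} with the regularity bound of Lemma \ref{lemma_c1} applied to both $N_B^n(\ell_k)$ and $|T_n^{\mathbf f}|$, obtaining $N_B^n(\ell_k)/|T_n^{\mathbf f}|\in[1/c_+,\,c_+]\cdot L_{n-k}/L_n$ with $c_+=2(K+1)$, and note that this uniform multiplicative constant disappears after taking logarithms and the nested limits, while \eqref{eq::gr-1} is read off directly from Def.~\ref{def::growthrate_def}. Your explicit remarks on restricting to the subsequence $\ell_k=2k+1$ by monotonicity and on the uniformity in $n$ of the additive error merely spell out what the paper leaves implicit.
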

\begin{proof} [Proof of Corollary \ref{cor:tau}]
Let us write $c_+:= 2(\mathrm{lmcs}(\mathbf f,1)+1)$. Combining Lemma \ref{claim:greedy} with Lemma \ref{lemma_c1} yields
 \be\label{eq::boxing-bound1} L_{n-k}\le N_B^n(\ell_k)\le \sum_{i=0}^{n-k}L_{i} \le c_+ L_{n-k}.\ee
Using Lemma \ref{lemma_c1} once more yields that $L_n\le |T_n^{\mathbf f}|\le c_+ L_n$ and combining this with \eqref{eq::boxing-bound1} we can observe
\be\label{eq:ratio}  \frac{N_B^n(\ell_k)}{|T_n^{\mathbf f}|}\in [1/c_+, c_+] L_{n-k}/ L_n = [1/c_+, c_+] \prod_{h=n-k}^{n-1} f(h)^{-1}.\ee
Using these bounds in Def.~\ref{modboxdim} we see that the constant prefactor will vanish when taking logarithm, yielding \eqref{eq::tau-1}. Eq. \eqref{eq::gr-1} is a direct consequence of Def.~\ref{def::growthrate_def}. 

\end{proof}

With Corollary \ref{cor:tau} at hand, we arrive to:
\begin{claim}The growth rate of Examples \ref{ex1} and \ref{ex2} exists, while they are not transfractal.
\end{claim}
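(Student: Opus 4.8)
The plan is to apply Corollary~\ref{cor:tau}, which under Condition~\ref{cond::reg} reduces both the growth rate and the transfinite fractal dimension to statements about averages of the sequence $(\log f(h))_h$, thereby bypassing the boxing quantities entirely. First I would verify that both examples satisfy Condition~\ref{cond::reg}: in each case $f(h)\ge 2$ for every $h$ (taking $c\ge 2$ in Example~\ref{ex1} and $a,b>0$ in Example~\ref{ex2}, so that the integer values $e^a,e^b$ are at least $2$), hence $\mathbf f$ contains no $1$'s and $\mathrm{lmcs}(\mathbf f,1)$ is finite. Thus Corollary~\ref{cor:tau} applies and I may work with the formulas \eqref{eq::tau-1} and \eqref{eq::gr-1}.

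For the \emph{existence of the growth rate} I would analyse the Ces\`aro average $\frac1n\sum_{h=0}^{n-1}\log f(h)$ from \eqref{eq::gr-1}. In Example~\ref{ex1} all but finitely many indices up to $n$ contribute $\log c$; the exceptional indices are the factorials $h=j!\le n-1$, of which there are only $O(\log n/\log\log n)$ by Stirling, each contributing $\log\lfloor e^j\rfloor\approx j$. Their total contribution is therefore $O\big((\log n/\log\log n)^2\big)=o(n)$, so the average converges to $\log c$ and $\mathrm{gr}(T_\infty^{\mathbf f})=c$ exists. In Example~\ref{ex2} the $a$-blocks $(m(m+1),(m+1)^2]$ and the intervening $b$-blocks each have length $m+1$; a counting argument shows that up to height $(m+1)^2$ the number of $a$-indices is $\tfrac{(m+1)(m+2)}{2}$ out of $(m+1)^2+1$ total, so the density of both symbols tends to $\tfrac12$. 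As each block has length $O(\sqrt n)$, the last incomplete block contributes $o(n)$, and the average converges to $(a+b)/2$; hence $\mathrm{gr}(T_\infty^{\mathbf f})=e^{(a+b)/2}$ exists.

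To show the trees are \emph{not transfractal} I would prove that for each fixed $k\ge 1$ the inner limit $\lim_{n\to\infty}\frac{1}{2k+1}\sum_{h=n-k}^{n-1}\log f(h)$ in \eqref{eq::tau-1} fails to exist, which is precisely the failure of the limit defining $\tau$ in Def.~\ref{modboxdim}. The device in both cases is to exhibit two subsequences of $n$ along which the width-$k$ window sum tends to different values. In Example~\ref{ex1}, for most large $n$ the window $[n-k,n-1]$ avoids every factorial and the sum equals $k\log c$, whereas whenever the window straddles some $j!$ the sum is at least $\log\lfloor e^j\rfloor\approx j$; since arbitrarily large factorials occur, the window sum has $\liminf=k\log c$ but $\limsup=+\infty$, so the inner limit does not exist. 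In Example~\ref{ex2}, because both the $a$-blocks and $b$-blocks grow in length without bound, for all large $n$ one can place the width-$k$ window entirely inside an $a$-block (sum $ka$) or entirely inside a $b$-block (sum $kb$); as $a\neq b$ the window average oscillates between $\tfrac{ka}{2k+1}$ and $\tfrac{kb}{2k+1}$ and cannot converge. In either case the limit in Def.~\ref{modboxdim} is undefined, so the tree is not transfractal.

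The step I expect to require the most care is the growth-rate computation for Example~\ref{ex1}: one must quantify precisely how sparse the factorial indices are and confirm that their individually large contributions still sum to $o(n)$, which is exactly where the Stirling estimate enters. By contrast, the non-existence of $\tau$ is comparatively soft, relying only on the presence of arbitrarily long homogeneous blocks (Example~\ref{ex2}) or arbitrarily large isolated values (Example~\ref{ex1}) to produce two subsequences of $n$ with distinct window-average limits.
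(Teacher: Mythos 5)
Your proposal is correct and follows essentially the same route as the paper: both reduce everything via Corollary~\ref{cor:tau} to Ces\`aro averages of $\log f$ (yielding $\log c$ resp.\ $(a+b)/2$) for the growth rate, and to non-convergence of the fixed-width window sums $\sum_{h=n-k}^{n-1}\log f(h)$ (factorial spikes resp.\ oscillation between $a$- and $b$-blocks) for the failure of $\tau$. Your explicit verification of Condition~\ref{cond::reg} (noting $c\ge 2$ and $a,b>0$ are needed) and the Stirling-type count of factorial indices merely spell out steps the paper leaves implicit.
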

\begin{proof}
We start with Example \ref{ex2}. The limit in \eqref{eq::gr-1} exists, since it equals
\[\log \mathrm{gr}(T_{\infty}^{\mathbf f})= \lim_{m\to \infty}\frac1{m^2} \sum_{h=1}^m ((h+1)a + h b)=(a+b)/2. \]
On the other hand, the inner limit (as $n\to \infty$) on the RHS of \eqref{eq::tau-1} does not exist for fixed $k$, since we see oscillations of $a$'s and $b$'s when $k\le m$, thus $\tau$ is undefined.
For Example \ref{ex1},
\[\lim_{n\to \infty} \frac1n\left( \sum_{i\le n} \log(c)- \sum_{j: j!\le n} j\right)\le \log \mathrm{gr}(T_{\infty}^{\mathbf f})\le  \lim_{n\to \infty} \frac1n\left(\sum_{j: j!\le n} j + \sum_{i\le n} \log(c)\right), \]
and hence the growth rate exists and equals $c$ since both the lower bound as well as the upper bound tend to $\log c$. On the other hand, the transfinite dimension does not exist, since, for each fixed $k$, as $n$ grows the inner sum $\sum_{h=n-k}^{n-1}\log f(h) $ occasionally encounters a factorial and hence a value other then $\log c$ as one of its terms, hence, the inner limit as $n\to \infty$ does not exist.
\end{proof}

Example \ref{ex2} led us to a natural generalization of the transfinite fractal dimension in such a way that it agrees the half of the logarithm of the growth rate for spherically symmetric trees under a mild condition on the growth of the degree sequence. This is the transfinite Cesaro fractal dimension in Def.~\ref{def::ourdim_def2}.

\begin{theorem}
Let $T_{\infty}^{\mathbf f}$ be a spherically symmetric tree with degrees $\mathbf f=(f(h))_{h\ge 1}$ that satisfies Condition \ref{cond::reg} and that
 \be \label{thm:ass}  \lim_{h\to \infty} \frac{\log f(h)}{h}=0.\ee Then, $\tau^\star(T_{\infty}^{\mathbf f})=\log (\mathrm{gr}(T_{\infty}^{\mathbf f}))/2$.
\end{theorem}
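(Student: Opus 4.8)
The plan is to reduce the transfinite Cesàro dimension to a sliding-window Cesàro average of the sequence $g(h):=\log f(h)$, using the comparison between the box-count ratio and a product of degrees already isolated in \eqref{eq:ratio}. Write $G:=\log(\mathrm{gr}(T_\infty^{\mathbf f}))$; by Definition \ref{def::growthrate_def} together with $L_n=\prod_{h=0}^{n-1}f(h)$ this equals $\lim_{n\to\infty}\frac1n\sum_{h=0}^{n-1}g(h)$ whenever the limit exists, and the computation below in fact shows that $\tau^\star$ exists precisely when $G$ does. As in the proof of Theorem \ref{hier_tau}, monotonicity of $N_B^n(\ell)$ in $\ell$ lets me evaluate the outer limit in Definition \ref{def::ourdim_def2} along the subsequence $\ell=\ell_k=2k+1$. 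Put $c_+:=2(\mathrm{lmcs}(\mathbf f,1)+1)$, which is finite by Condition \ref{cond::reg}. Applying the two-sided bound \eqref{eq:ratio} of Corollary \ref{cor:tau} to the tree truncated at height $i+\ell_k$ (so the role of ``$n$'' is played by $i+2k+1$) yields
$$-\log\frac{N_B^{\,i+\ell_k}(\ell_k)}{|T_{i+\ell_k}^{\mathbf f}|}=\sum_{h=i+k+1}^{i+2k}g(h)+\rho_i,\qquad |\rho_i|\le\log c_+,$$
since $(i+\ell_k)-k=i+k+1$ and $(i+\ell_k)-1=i+2k$. The inner Cesàro expression is therefore the average of these windowed sums divided by $2k+1$, plus an error whose Cesàro mean is at most $\log c_+/(2k+1)$ in absolute value.

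The combinatorial core is to compute $M_n^{(k)}:=\frac{1}{n(2k+1)}\sum_{i=1}^{n}\sum_{h=i+k+1}^{i+2k}g(h)$ as $n\to\infty$. I would interchange the two sums: a fixed $h$ lies in the $i$-th window exactly when $h-2k\le i\le h-k-1$, i.e.\ for exactly $k$ indices $i$ when $2k+1\le h\le n+k+1$ (the bulk), and for fewer indices near the two ends. Hence $M_n^{(k)}=\frac{k}{2k+1}\cdot\frac1n\sum_{h=2k+1}^{n+k+1}g(h)$ plus two boundary corrections, and the displayed main factor converges to $G$ as $n\to\infty$ for fixed $k$, by the existence of $G$ together with $\frac1n\sum_{h=0}^{2k}g(h)\to0$.

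Controlling the boundary layers is the main obstacle, and it is where both hypotheses pay off. The low layer ($k+2\le h\le 2k$) is a fixed finite sum once $k$ is fixed, hence uniformly bounded in $n$, so it disappears after dividing by $n$. The high layer ($n+k+2\le h\le n+2k$) has at most $k-1$ terms, each bounded by $k\,g(h)$ with $h\sim n$; here assumption \eqref{thm:ass} is indispensable, as it gives $g(h)=\log f(h)=o(h)=o(n)$, so this layer contributes $o(n)/\bigl(n(2k+1)\bigr)\to0$. This is precisely the mechanism that fails for the ordinary $\tau$ in Example \ref{ex1}: an occasional factorial spike survives the inner limit there, whereas after Cesàro averaging over $i$ it is smeared out. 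Thus $\lim_{n\to\infty}M_n^{(k)}=\tfrac{k}{2k+1}G$, and together with $|\rho_i|\le\log c_+$ the inner Cesàro expression is squeezed, as $n\to\infty$, between $\frac{k}{2k+1}G\pm\frac{\log c_+}{2k+1}$.

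Finally I let $k\to\infty$: since $\frac{k}{2k+1}\to\frac12$ and $\frac{\log c_+}{2k+1}\to0$, the squeeze forces the double limit to equal $G/2$, giving $\tau^\star(T_\infty^{\mathbf f})=\log(\mathrm{gr}(T_\infty^{\mathbf f}))/2$. The two delicate points I expect are the window-overlap bookkeeping that produces the factor $\tfrac{k}{2k+1}$, and the observation that the additive constant $\log c_+$ --- which need not vanish in the inner limit over $n$ --- is harmless because it is divided by $2k+1$ and therefore killed in the outer limit over $k$.
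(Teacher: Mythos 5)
Your proof is correct and follows essentially the same route as the paper's: both reduce via the two-sided bound \eqref{eq:ratio} to Cesàro averages of sliding-window sums of $\log f$, interchange the order of summation, and split the result into a bulk term yielding $\frac{k}{2k+1}\log\left(\mathrm{gr}(T_{\infty}^{\mathbf f})\right)$ plus two boundary layers, with assumption \eqref{thm:ass} invoked exactly where it is indispensable, namely for the high layer at $h\approx n$ (and your explicit remark that the additive $\log c_+$ error survives the inner limit but is killed by the $2k+1$ denominator in the outer limit is, if anything, stated more carefully than in the paper). The only immaterial difference is that you truncate at height $i+\ell_k$ as Definition \ref{def::ourdim_def2} prescribes, so your window is $[i+k+1,\,i+2k]$ rather than the paper's $[i,\,i+k-1]$ --- a constant shift of the window that changes nothing in the limits (and note only that your bulk sum then runs up to $h=n+k+1$, so its convergence to $\log\left(\mathrm{gr}(T_{\infty}^{\mathbf f})\right)$ uses the same $o(n)$ estimate from \eqref{thm:ass} that you already apply to the high layer).
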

\begin{remark}The growth condition \eqref{thm:ass}  means that the degrees grow sub-exponentially. For instance, $f(h)=\lfloor\exp(h^\gamma)\rfloor$ for any $\gamma<1$ satisfies this criterion.
\end{remark}
\begin{proof}
By Def.~\ref{def::ourdim_def2} we obtain for any spherically symmetric tree satisfying Condition \ref{cond::reg}, using \eqref{eq:ratio}
\be\label{eq:cesaro-1}
	\tau^*\left(T_{\infty}^f\right)=
\lim_{k \to \infty} \lim_{m \to \infty}  \frac1m \sum_{i=1}^m \frac{\log(B_k^{i+k}/\left|T_{i+k}^{\mathbf f}\right|)}{-(2k+1)}
=
\lim_{k \to \infty} \lim_{m \to \infty}  \frac1m \sum_{i=1}^m \frac{\log(L_{i}/L_{i+k})}{-(2k+1)}.
\ee
Observing that $L_n=\prod_{h=1}^{n-1}f(h)$  and $\lim_{k \to \infty} (2k)/(2k+1)=1$ we obtain that
\be\label{eq:3case}
\tau^*\left(T_{\infty}^f\right)=
\lim_{k\to \infty}\lim_{m\to \infty} \frac1{2mk}\sum_{i=1}^m \sum_{j=i}^{i+k-1} \log f(j).
\ee
Exchanging sums yields that the RHS inside the limits equals
\be\label{eq::3terms}
  \frac{1}{2m} \sum_{j=1}^{k-1} \frac{j}{k} \log f(j) +  \frac{1}{2m}\sum_{j=k}^{m-1}  \log f(j)
+  \frac{1}{2m} \sum_{j=m}^{m+k-1}\left( 1- \frac{j-m}{k}  \right) \log f(j).
\ee
%
The first term in \eqref{eq::3terms} tends to zero as evaluating the inner limit ($m\to \infty$) in \eqref{eq:cesaro-1} for any fixed $k$.
For the second term in \eqref{eq:3case} we have:
\be
\begin{aligned}
\label{eq::2ndterm}
	\lim_{k \to \infty} \lim_{m \to \infty} \frac{1}{2m} \sum_{j=k}^{m-1}  \log f(j) &= \lim_{k \to \infty} \lim_{m \to \infty} \frac{1}{2m} \sum_{j=1}^{m-1} \log f(j) - \lim_{k \to \infty} \lim_{m \to \infty} \frac{1}{2m} \sum_{j=1}^{k-1} \log f(j) \\
&=    \lim_{m \to \infty} \frac{1}{2m} \sum_{j=1}^{m-1} \log f(j),
\end{aligned}
\ee
since the second term tends on the RHS to zero for each fixed $k$, and the first term does not depend on $k$.
Regarding the third term in \eqref{eq:3case}, assumption \eqref{thm:ass} implies that for any fixed $k$,
\be\label{eq::3trdterm} \lim_{m \to \infty} \frac{1}{2m} \sum_{j=m}^{m+k-1}\left( 1- \frac{j-m}{k}  \right) \log f(j)= 0, 
\ee
 since the bounds $0 \leq \left( 1- \frac{j-m}{k}  \right) \log f(j) \leq  \log f(j)$ gives the desired result using the squeeze theorem. The question arises whether condition \eqref{thm:ass} could be weakened to the condition in \eqref{eq::3trdterm}. For this, note that each term in \eqref{eq::3trdterm} must tend to zero. The term with  $j=m$ equals  $(\log f(m))/2m$, that is $1/2$ of the term in \eqref{thm:ass}. In other words, the limit \eqref{eq::3trdterm} being equal to zero is equivalent to \eqref{thm:ass}.

Combining \eqref{eq::3terms}, \eqref{eq::2ndterm}, \eqref{eq::3trdterm} with Corollary \ref{cor:tau} we obtain that 
$$ \tau^*\left(T_{\infty}^f\right) = \lim_{m \to \infty} \frac{1}{2m} \sum_{j=1}^{m-1} \log f(j)=\frac{\log \mathrm {gr}(T_{\infty})}{2}.$$

\end{proof}
\subsection{Supercritical Galton-Watson trees}

In this section, we will consider the supercritical Galton-Watson trees.
\begin{definition}[Galton-Watson branching process]
Let $\mathbf q = (q_0, q_1, q_2, \dots)$ be an infinite vector of nonnegative real numbers with $\sum_{i=0}^{\infty}q_i=1$ and let $\mathbb{P}_{\mathbf q}$ be the probability measure
on rooted trees such that the number of offspring (children) of each vertex is i.i.d. and the probability that a given
vertex has $i$ children is $q_i$.

Let $Z_n$ denote the number of vertices at distance $n$ from the root. We write $\mu$ for the mean of the offspring distribution $\mu = \mathbb{E}(Z_1) = \sum_{i=0}^{\infty}iq_i$.

A Galton-Watson branching process (BP) with mean offspring $\mu$ is said to be supercritical
if $\mu > 1$, critical if $\mu = 1$ and subcritical if $\mu < 1$.
\end{definition}
It is well-known that in the subcritical case ($\mu < 1$) and in the critical case ($\mu = 1$), the BP dies out eventually (i.e. $\exists \; n: \; Z_n=0$) with probability $1$, while in the supercritical
case ($\mu > 1$), the BP survives (i.e., $\forall \; n: \; Z_n\ge1$) with positive probability \cite{athreya2012branching}. When $q_0=0$, the BP survives with probability $1$, since each vertex has at least one child. For further discussion and characterization of Galton-Watson BPs we refer the reader to \cite{athreya2012branching} and \cite{duquesne2002random}.
A supercritical  Galton-Watson BP behaves similarly to a  deterministic regular tree of the ``same growth'' that suggests that the  transfinite fractal dimension should be $\log \mu$
where $\mu$ is the mean of the offspring distribution.
When $\mu$ is an integer, this deterministic tree is just the $\mu$-ary tree, but when $\mu$ is nonintegral, it is a virtual tree, in the sense of Pemantle and Peres \cite{pemantle1995critical,pemantle1995galton}.

\begin{theorem}
\label{G_W_tau}
Let $\mathrm{GW}_{\infty}$ be a supercritical Galton-Watson tree with the following assumptions
$ q_0 = 0, \; q_1  \neq 1 $
and
\be\label{xlogx} 
\mathbb{E}(Z_1 |\log(Z_1)|^+) < \infty,
\ee
where $|x|^+:=\max\{x, 0\}$ denotes the positive part of the expression.
Then, $ \tau \left( \mathrm{GW}_{\infty} \right)  \rightarrow (\log (\mu))/2$ almost surely.
\end{theorem}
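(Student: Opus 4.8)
The plan is to sandwich the random box count $N_B^n(\ell_k)$ between two functionals of the generation sizes via the greedy boxing of Lemma~\ref{claim:greedy}, and then to pin down the almost-sure asymptotics of those functionals using the Kesten--Stigum theorem. Since $q_0=0$, every vertex has at least one child, so Lemma~\ref{claim:greedy} applies verbatim with $Z_i$ in place of $L_i$, giving, for every realization of the tree,
\be Z_{n-k}\le N_B^n(\ell_k)\le \sum_{i=0}^{n-k}Z_i,\ee
where $|T_n|=\sum_{i=0}^n Z_i$ is the size of the height-$n$ truncation. The assumptions $q_0=0$ and $q_1\neq 1$ guarantee $\mu=\sum_i iq_i>1$ and that extinction is impossible. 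The key probabilistic input I would then invoke is the Kesten--Stigum theorem: under the $x\log x$ moment condition~\eqref{xlogx}, the nonnegative martingale $W_n:=Z_n/\mu^n$ converges almost surely and in $L^1$ to a limit $W$ with $\mathbb{E}[W]=1$; since extinction has probability zero, in fact $W>0$ almost surely.

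Next I would convert $W_n\to W$ into asymptotics for the ratios in Def.~\ref{modboxdim}. Writing $Z_i=W_i\mu^i$ and dividing through by $\mu^n$, for each fixed $k$,
\be \frac{Z_{n-k}}{|T_n|}=\frac{W_{n-k}\mu^{-k}}{\sum_{j=0}^{n}W_{n-j}\mu^{-j}},\qquad \frac{\sum_{i=0}^{n-k}Z_i}{|T_n|}=\frac{\sum_{j=k}^{n}W_{n-j}\mu^{-j}}{\sum_{j=0}^{n}W_{n-j}\mu^{-j}}.\ee
On the probability-one event where $W_n\to W>0$, the sequence $(W_i)$ is bounded, so $\mu^{-j}$ dominates the summands and dominated convergence yields, as $n\to\infty$,
\be \frac{Z_{n-k}}{|T_n|}\longrightarrow \frac{W\mu^{-k}}{W\mu/(\mu-1)}=(\mu-1)\mu^{-(k+1)},\qquad \frac{\sum_{i=0}^{n-k}Z_i}{|T_n|}\longrightarrow \mu^{-k}.\ee
Notice that $W$ cancels, so these limits are deterministic and strictly positive (here $\mu>1$ and $W>0$ are both used), which is precisely what makes $\tau$ a deterministic a.s.\ constant.

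Finally I would substitute these limits into Def.~\ref{modboxdim}. Taking logarithms in the sandwich, dividing by $-(2k+1)$ (which reverses the inequalities), and letting $n\to\infty$ gives, on the same event,
\be \frac{k\log\mu}{2k+1}\le \lim_{n\to\infty}\frac{\log\big(N_B^n(\ell_k)/|T_n|\big)}{-(2k+1)}\le \frac{(k+1)\log\mu-\log(\mu-1)}{2k+1}.\ee
Both bounds converge to $(\log\mu)/2$ as $k\to\infty$, so the squeeze theorem yields $\tau(\mathrm{GW}_{\infty})=(\log\mu)/2$ almost surely. The one genuinely non-trivial ingredient, and the main obstacle, is the strict positivity $W>0$: this is exactly where the hypotheses $q_0=0$ (no extinction) and~\eqref{xlogx} (the Kesten--Stigum $x\log x$ condition) enter, and without $W>0$ the partial sums $\sum_{i\le n}Z_i$ could be of order smaller than $\mu^n$, destroying the clean deterministic limits and hence the identification of $\tau$.
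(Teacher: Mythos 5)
Your proof is correct and follows essentially the same route as the paper: the greedy-boxing sandwich of Lemma~\ref{claim:greedy}, the Kesten--Stigum martingale limit $W_n\to W>0$ under \eqref{xlogx} with $q_0=0$, and then a squeeze in $k$ after taking logarithms. Your dominated-convergence step for the weighted sums $\sum_j W_{n-j}\mu^{-j}$ is exactly the content of the paper's elementary Lemma~\ref{GW_lemma}, and your limiting constants $(\mu-1)\mu^{-(k+1)}$ and $\mu^{-k}$ match the paper's bounds.
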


The following elementary fact will be needed to prove Theorem \ref{G_W_tau}.

\begin{lemma} \label{GW_lemma} Let $\xi_k$ be an arbitrary convergent sequence of real numbers,  that is, $\lim_{k\to \infty}\xi_k =A$ for some $ A \in \mathbb{R}$. For $\beta\in(0,1)$ let $S_n:=\sum_{k=0}^n \beta^{n-k} \xi_k$. Then $$\lim_{n \to \infty} S_n = \sum_{k=0}^{\infty}\beta^k \cdot A =\frac{A}{1-\beta}\in \mathbb{R}.$$
\end{lemma}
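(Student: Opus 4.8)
The plan is to reduce the sum $S_n$ to a first-order linear recursion and then squeeze it between its $\liminf$ and $\limsup$. Concretely, I would first pull out the $k=n$ term to obtain, for every $n\ge 1$,
$$S_n=\sum_{k=0}^n \beta^{n-k}\xi_k=\xi_n+\beta\sum_{k=0}^{n-1}\beta^{(n-1)-k}\xi_k=\xi_n+\beta S_{n-1},$$
so that the whole problem is governed by the recursion $S_n=\xi_n+\beta S_{n-1}$. This already makes the target value transparent: if a limit $L$ exists it must satisfy $L=A+\beta L$, i.e.\ $L=A/(1-\beta)$; the remaining work is to show that the limit \emph{does} exist.

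Before passing to limits I would record that $(S_n)$ is bounded. Since $(\xi_k)$ converges it is bounded, say $|\xi_k|\le B$ for all $k$, and then
$$|S_n|\le B\sum_{k=0}^n\beta^{n-k}=B\sum_{j=0}^n\beta^{\,j}\le \frac{B}{1-\beta},$$
using $\beta\in(0,1)$. Consequently $U:=\limsup_{n}S_n$ and $V:=\liminf_n S_n$ are finite real numbers. I would then take $\limsup$ on both sides of the recursion: because $\xi_n\to A$ is a genuine limit it splits off as an additive constant, and because $\beta>0$ together with the shift-invariance of $\limsup$ one has $\limsup_n(\beta S_{n-1})=\beta U$; hence $U=A+\beta U$, i.e.\ $U=A/(1-\beta)$. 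The identical computation with $\liminf$ gives $V=A+\beta V$, i.e.\ $V=A/(1-\beta)$. Therefore $\liminf_n S_n=\limsup_n S_n=A/(1-\beta)$, which is exactly the claimed limit.

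The argument has essentially no hard step; the only point requiring care is the passage to $\limsup$/$\liminf$ in the recursion, where one must have already secured finiteness via the boundedness estimate so that the identities $\limsup(\xi_n+\beta S_{n-1})=A+\beta\limsup S_{n-1}$ and its $\liminf$ analogue are legitimate (these use both that $\xi_n$ converges and that $\beta>0$). As an equally short fallback I would mention the direct Toeplitz/Ces\`aro estimate: writing $\xi_k=A+\eta_k$ with $\eta_k\to 0$, one has $S_n=A\frac{1-\beta^{n+1}}{1-\beta}+\sum_{k=0}^n\beta^{n-k}\eta_k$, and for the error term one splits the index range at a threshold $N$ beyond which $|\eta_k|<\varepsilon$, bounding the ``old'' block by $\beta^{\,n-N+1}B/(1-\beta)\to0$ and the ``recent'' block by $\varepsilon/(1-\beta)$, whence $\limsup_n\bigl|S_n-A/(1-\beta)\bigr|\le\varepsilon/(1-\beta)$ for every $\varepsilon>0$.
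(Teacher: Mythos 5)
Your argument is correct, and your main route differs from the paper's. The paper proves the lemma by the direct splitting you relegate to a fallback: it fixes $\epsilon>0$, picks $N^*$ with $|\xi_i-A|<\epsilon$ for $i>N^*$, traps the ``recent'' block $\sum_{i=N^*}^{n}\xi_i\beta^{n-i}$ in $(A-\epsilon,A+\epsilon)\cdot\sum_{k\ge 0}\beta^k$, and kills the ``old'' block via $\bigl|\sum_{i=0}^{N^*}\xi_i\beta^{n-i}\bigr|\le K\sum_{k=n-N^*}^{n}\beta^k\to 0$ --- essentially your Toeplitz-style estimate, with the same threshold device. Your primary proof instead observes the recursion $S_n=\xi_n+\beta S_{n-1}$, establishes boundedness of $(S_n)$ from boundedness of $(\xi_k)$, and then applies $\limsup$ and $\liminf$ to the recursion, using that a genuinely convergent summand splits off additively and that $\beta>0$ allows scaling through, to get $U=A+\beta U$ and $V=A+\beta V$, forcing $U=V=A/(1-\beta)$. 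This is a legitimate and in some ways cleaner argument: it avoids all explicit $\epsilon$--$N^*$ bookkeeping, and you correctly flag the one point of care, namely that finiteness of $U$ and $V$ must be secured \emph{before} solving the fixed-point equations (otherwise $U=A+\beta U$ could be vacuous). What the paper's approach buys in exchange is an explicit quantitative error bound, $\limsup_n|S_n-A/(1-\beta)|\le\epsilon/(1-\beta)$ for every $\epsilon$, rather than a pure existence-of-limit statement; both establish exactly the claimed conclusion.
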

\begin{proof}
Using the convergence of $\xi_k$, by definition $\forall \epsilon >0: \; \exists \; N^*$ s.t. if $i>N^*$ then $|\xi_i-A|<\epsilon$. Now we consider the following sum
$$\sum_{i=N^*}^n \xi_i \beta^{n-i}  \in (A-\epsilon, A+\epsilon) \cdot \sum_{k=0}^{n-N^*} \beta^k.$$
Hence,
\be \label{sum1}
\lim_{n \to \infty} \sum_{i=N^*}^{n} \xi_i \beta^{n-i} \in  (A-\epsilon, A+\epsilon) \cdot \sum_{k=0}^{\infty}\beta^k  =  \left(\frac{A-\epsilon}{1-\beta},  \frac{A+\epsilon}{1-\beta}\right).
\ee
Now let us consider the remaining part of the sum, namely $\sum_{i=0}^{N^\star-1} \xi_i \beta^{n-i}$. Due to the fact that $\xi_i$ is convergent thus for some $K: \xi_i <K, \; \forall i$ and $\lim_{n\to \infty}\sum_{k=n-N^*}^n\beta^k = 0$ we have
\be \label{sum2}
\sum_{i=0}^{N^*} \xi_i \beta^{n-i} < K \cdot \sum_{k=n-N^*}^n\beta^k \xrightarrow{n\to \infty} 0.
\ee
Using \eqref{sum1} and \eqref{sum2} we can conclude that $\lim_{n \to \infty} S_n = A/(1-\beta)$.
\end{proof}

\begin{proof}[Proof of Theorem \ref{G_W_tau}]
By writing $W_i:=Z_i \mu^{-i}$, \cite[Chapter I, Part C, Theorem 1]{athreya2012branching}, the limit $W=\lim_{i\to \infty}W_i$ exists and is in $(0, \infty)$ almost surely under the $\mathbf{x\log x}$ assumption in \eqref{xlogx} and that $q_0=0$. With this notation, we bound the ratio $B^n_k/\left|V(\mathrm{GW}_n)\right|$ using Lemma  \ref{claim:greedy} (note that $Z_i$ has the notation $L_i$ there):
\be  \label{eq:gw:1}
\frac{W_{n-k}\mu^{n-k}}{\sum_{i=0}^n W_i \mu^i} =\frac{Z_{n-k}
}{\sum_{j=0}^n Z_j}
 \le \frac{B^n_k}{\left|V(\mathrm{GW}_n)\right|} \leq  \frac{\sum_{i=0}^{n-k} Z_{i}
}{\sum_{j=0}^n Z_j}=\frac{\sum_{i=0}^{n-k} W_{i} \mu^{i}}{\sum_{i=0}^n W_i \mu^i}.
\ee
Dividing both the numerators and denominators with $\mu^{-n}$, the bound turns into
\be (1/\mu)^{k}\frac{W_{n-k} }{\sum_{i=0}^n W_i (1/\mu)^{n-i}} \le \frac{B^n_k}{\left|V(\mathrm{GW}_n)\right|} \leq (1/\mu)^{k} \frac{\sum_{i=0}^{n-k} W_i (1/\mu)^{n-k-i}}{\sum_{i=0}^n W_i (1/\mu)^{n-i}} \ee
Applying now Lemma \ref{GW_lemma} with $\xi_k:=W_k, \beta:=1/\mu$, we see that the almost sure limits exists for each fixed $k$
\be\ba \lim_{n\to \infty}& \frac{\sum_{i=0}^{n-k} W_i (1/\mu)^{n-k-i}}{\sum_{i=0}^n W_i (1/\mu)^{n-i}}=\frac{W/(1-1/\mu)}{W/(1-1/\mu)}=1,\\
\lim_{n\to \infty}& \frac{W_{n-k} }{\sum_{i=0}^n W_i (1/\mu)^{n-i}} =\frac{W}{W/(1-(1/\mu))}=1-(1/\mu).\ea \ee
Therefore, for almost surely
\be (1/\mu)^{k}(1-(1/\mu))/2 \le \frac{B^n_k}{\left|V(\mathrm{GW}_n)\right|} \leq 2 (1/\mu)^{k}.  \ee

Substituting these bounds into Def.~\ref{modboxdim} of transfinite fractal dimension, with $\ell=2k+1$, yields that
\be
\tau\left(\left\{ \mathrm{GW}_n\right\}_{n\in \mathbb N}\right) = \lim_{k \to \infty} \lim_{n \to \infty} \frac{\log(B^n_k/\left|V(\mathrm{GW}_n)\right|)}{-(2k+1)} = \frac{\log (\mu)}{2}.
\ee

\end{proof}

\begin{remark}
It is clear that $ \mathrm{gr} \left( \mathrm{GW}_{\infty} \right)  \xrightarrow{a.s.} \mu$ which implies that the almost sure limit of the transfinite fractal dimension and the logarithm of the growth rate of Galton-Watson trees differ only in a factor of 2 under the assumptions of Theorem \ref{G_W_tau}.
\end{remark}

\begin{remark}
Under some regularity assumptions the transfinite fractal dimension of Galton-Watson trees is well-defined in contrast with spherically symmetric trees where we had to introduce the concept of  transfinite Cesaro transfinite fractal dimension. This phenomena arises from the fact that the random growth of the Galton-Watson branching process is smoother than the deterministic growth of spherically symmetric trees.
\end{remark}

\section{Conclusion and discussion}
\label{conc}

In this paper, we have investigated the heuristic statement that networks with hierarchical
structure are fractal (i.e., self-similar) objects. In particular, we considered
a graph sequence with strict hierarchical structure, and investigated its fractal
properties. Doing so we showed that the definition of fractality cannot be applied
to networks with locally 'tree-like' structure and exponential growth rate of neighborhoods.
However, the box-covering method gives a parameter that is related to the growth rate of trees. We also introduced a more general concept, the transfinite Cesaro fractal dimension. We investigated various models: the hierarchical graph sequence
model introduced by Komj\'athy and Simon, Song-Havlin-Makse model,
spherically symmetric trees, and supercritical Galton-Watson trees. We determined bounds on the optimal box-covering and calculated the transfinite fractal dimension of the aforementioned models using rigorous techniques. It would be also interesting to apply our method to other locally tree like graphs such as Erd\H{o}s-R\'enyi graph, preferential attachment graph or configuration model.

\section*{Funding}

	The research reported in this paper was supported by the Higher Education
Excellence Program of the Ministry of Human Capacities in the frame of Artificial Intelligence research area of Budapest University of Technology
and Economics (BME FIKP-MI/SC). The publication is also supported by the EFOP-3.6.2-16-2017-00015 project entitled ”Deepening the activities of HU-MATHS-IN, the Hungarian Service Network for Mathematics in Industry and Innovations” through University of Debrecen. The project has been supported by the European Union, co-financed by the European Social Fund. The work of K. Simon and R. Molontay is supported by NKFIH K123782 research grant and by MTA-BME Stochastics Research Group. The work of J. Komj\'athy is partially financed by the
programme Veni \#639.031.447, financed by the Netherlands Organisation for Scientific Research (NWO).

	\section*{Acknowledgement} We would like to thank J\'anos Kert\'esz for useful conversations. We also thank Marcell Nagy for reading through the manuscript. We are grateful for the anonymous reviewers for their careful reading of our manuscript and their many insightful comments and suggestions.

\section*{Appendix}
\begin{proof}[Proof of Lemma \ref{diam_lem}]The proof is a rewrite of \cite{komjathy2011generating} that we include for completeness.
For two arbitrary vertices $\un x,\un y \in \Sigma_n$ we denote the length of their common prefix by  $k=k(\un x,\un y):= |\un x\wedge \un y|$.
Furthermore, let us decompose the postfixes $\tilde{ \un x}, \tilde{\un y}$  into longest possible blocks of digits of the same type:
\begin{align}
	\label{block}
	\tilde {\un x }=: \un b_1 \un b_2\dots \un b_r, \ \tilde {\un y}= \un c_1 \un c_2\dots \un c_q,
\end{align}
with
\[ \{1,2\}\ni\text{typ}(\un b_i)\neq \text{typ}(\un b_{i+1})\in \{1,2\},  \mbox{ and } \{1,2\}\ni\text{typ}(\un c_j)\neq \text{typ}(\un c_{j+1})\in \{1,2\}.\]
We denote the number of blocks in $\tilde{\un x},\tilde{\un y}$ by $r$ and $q$, respectively.
From the definition  of the edge set of $E(\mathrm{HM}_n)$, it follows that for any path $P(\un x,\un y)= (\un x = \un q^0, \dots, \un q^\ell = \un y)$, the consecutive vertices  on the path only differ in their postfixes, and these have different types. That is, each consecutive pair of vertices can be written in the form
\[ \forall i, \un q^i= \un w^i \un z^i, \ \un q^{i+1}= \un w^i \tilde{\un z}^{i}, \text{ with } \text{typ}(\un z^i)\ne \text{typ}(\tilde{ \un z}^{i})\in \{1,2\}.
\]
Now we fix an arbitrary self-map $p$ of $\Sigma$ such that
\[(x, p(x) ) \in E(G) \ \forall x \in G. \]
Most commonly, $p(p(x))\ne x$.
Note that $x$ and $p(x)$ have different types since $G$ is bipartite. For a word $\un z=(z_1 \dots z_m)$ with $\text{typ}(\un z )\in \{1,2\}$ we define $p(\un z):=(p(z_1) \dots p(z_m))$. Then, Def.~\ref{edgeset_def} implies that
\be \label{pairedge}
(\un t\un z, \un t p(\un z)) \text{ is an edge in } G_{\ell+m}, \forall \un t=(t_1\dots t_\ell).
\ee
Using \eqref{pairedge}, we construct a path $P(\un x, \un y)$ between two arbitrary vertices $\un x$ and $\un y$ that has length at most $r+q +\mathrm{diam}(G)-2$.
Starting from $\un x$ the first half of the path $P(\un x, \un y)$ is as follows:
\[
\ba 
\hat {\un x}^0&= \un x = (\un x \wedge \un y) \un b_1 \dots \un b_{r-1} \un b_r\\
\hat {\un x}^1&= (\un x \wedge \un y) \un b_1 \dots \un b_{r-1}p(\un b_{r})\\
&\dots \\
\hat {\un x}^{r-1}&= (\un x \wedge \un y) \un b_1 p(\un b_2 \dots p(\un b_{r-1}p(\un b_{r}))),\\
\ea
\]
Starting from $\un y$ the first half of the path $P(\un x, \un y)$ is as follows:
\[
\ba \label{yhat}
\hat {\un y}^0&= \un y = (\un x \wedge \un y) \un c_1 \un c_2\dots  \un c_r\\
\hat {\un y}^1&= (\un x \wedge \un y) \un c_1 \dots \un c_{r-1}p(\un c_{r})\\
&\dots \\
\hat {\un y}^{q-1}&= (\un x \wedge \un y) \un c_1 p(\un c_2 \dots p(\un c_{r-1}p(\un c_{q}))).\\
\ea
\]
It follows from \eqref{pairedge} that $P_x:=(\hat {\un x}^0, \hat {\un x}^1, \dots,\hat {\un x}^{r-1})$ and $P_y:=(\hat {\un y}^{q-1}, \dotsm \hat {\un y}^1, \hat {\un y}^0)$
are two paths in $\mathrm{HM}_n$.
To construct $P(\un x, \un y)$, it remains to connect $\hat {\un x}^{r-1}$ and $\hat {\un y}^{q-1}$. Using \eqref{pairedge} this can be done with a path $P_c$  of length at most $\mathrm{diam}(G)$. Indeed, since the postfixes $\un c_1 p(\un c_2 \dots p(\un c_{r-1}p(\un c_{q})))$ and $\un b_1 p(\un b_2 \dots p(\un b_{r-1}p(\un b_{r})))$ both have a type, one can connect them in at most as many edges as the diameter of the base graph\footnote{One can do this coordinate-wise by using the edge-connection rule described in \textbf{(b)} after Def.~\ref{edgeset_def}: Suppose $\un z=z_1z_2 \dots z_k$ and $\un v=v_1v_2\dots v_k$ are two vertices that both have a type. Then for each coordinate pair  $z_i, v_i$ we choose the shortest path on the base graph $G$ that connects them, that we denote by $\mathcal P_i$ with length $m_i<\mathrm{diam}(G)$. Then $\mathrm{dist}(\un z, \un v)=\max_{i}{m_i}$, and the path can be realized so that each coordinate follows the path $\mathcal P_i$ independently. The shorter paths simply stay put at their final vertex ($z_i$) once they are finished.}.   

Clearly,
\[ 
\mathrm{Length}(P(\un x, \un y ))\le r+q+\mathrm{diam}(G)-2\le 2(n-1)+\mathrm{diam}(G).
\]

For the lower bound on the diameter of $\mathrm{HM}_n$, we show that we can find two vertices in $\mathrm{HM}_n$ of distance $2(n-1)+\mathrm{diam}(G)$.
Pick two vertices with $|\un x \wedge \un y|=0$, so $x_1\ne y_1$ so that the distance between $x_1$ and $y_1$ in $G$ is exactly $\mathrm{diam}(G)$, and set each blocks $b_i$ and $c_i$ of length $1$. Note that in each step on any path between two vertices, the number of blocks in \eqref{block} changes by at most one. Further, since $x_1 \neq y_1$ to connect $\un x$ to $\un y$, we have to reach two vertices that have a type.
Starting from $\un x$, to reach the first vertex $\un a=(x_1\dots )$ of this property, we need at least $n-1$ steps on any path $\tilde P$. Similarly, starting from $\un y$, we need at least $n-1$ steps to reach the first vertex $\un b=(y_1 \dots)$ where all the digits are of the same type. Since the distance of  $x_{1}$ and $y_{1}$ in $G$ is $\mathrm{diam(G)}$, and we can change the first digit of a vertex on a path only to a neighbor digit in $G$ in one step on any path, we need at least $\mathrm{diam}(G)$ edges to connect $\un a$ to $\un b$.
\end{proof}

\bibliographystyle{plain}
\bibliography{cikk}

\end{document}